\newcommand{\adm}[1]{{\left\vert\kern-0.25ex\left\vert\kern-0.25ex\left\vert #1 
		\right\vert\kern-0.25ex\right\vert\kern-0.25ex\right\vert}}
\newtheorem{theorem}{Theorem}[section]
\newtheorem{corollary}[theorem]{Corollary}
\newtheorem{proposition}[theorem]{Proposition}
\newtheorem{definition}[theorem]{Definition}
\theoremstyle{definition}
\newtheorem{example}[theorem]{Example}
\newtheorem{remark}[theorem]{Remark}
\theoremstyle{definition}
\def\bR{\mathbb{R}}
\def\bC{\mathbb{C}}
\def\bN{\mathbb{N}}
\def\smo{\setminus\{0\}}
\def\cF{\mathcal{F}}
\def\cS{\mathcal{S}}
\def\rd{\bR^d}
\def\rdd{\bR^{2d}}
\def\lan{\langle}
\def\ran{\rangle}
\def\supp{\text{supp}}
\def\wt{\widetilde}
\def\S0{S^0_{0,0}}
\def\Bd'{B_{\delta'}}
\def\cBd'{\bar{B}_{\delta'}}
\def\vp{\varphi}
\def\veps{\varepsilon}
\def\wh{\widehat}
\def\ird{\int_{\rd}}
\def\smo{\setminus\{0\}}
\def\lax{\lan x \ran}
\def\Fmu{F_\mu}
\def\XXint#1#2#3{{\setbox0=\hbox{$#1{#2#3}{\int}$ }
		\vcenter{\hbox{$#2#3$ }}\kern-.6\wd0}}
\newcommand*\dd[1]{\mathop{}\!\mathrm{d}#1}
\def\diam{\text{diam}}
\begin{document}
	
	\title[Phase space analysis of higher-order dispersive equations]{Phase space analysis of higher-order dispersive equations with point interactions}
	
	\author[S. Mazzucchi]{Sonia Mazzucchi}
	\address{Dipartimento di Matematica, Università di Trento, Via Sommarive 14, 38123 Povo, Italy}
	\email{sonia.mazzucchi@unitn.it}
	
	\author[F. Nicola]{Fabio Nicola}
	\address{Dipartimento di Scienze Matematiche ``G. L. Lagrange'', Politecnico di Torino, corso Duca degli Abruzzi 24, 10129 Torino, Italy}
	\email{fabio.nicola@polito.it}
	
	\author[S.\ I. Trapasso]{S. Ivan Trapasso}
	\address{Dipartimento di Scienze Matematiche ``G. L. Lagrange'', Politecnico di Torino, corso Duca degli Abruzzi 24, 10129 Torino, Italy}
	\email{salvatore.trapasso@polito.it}
	
	\subjclass[2020]{35Q55, 35J10, 42B15, 35B30, 35Q40, 42B35}
	\keywords{Nonlinear dispersive equations, potential measures, point interactions, generalized Fresnel functions, phase space analysis, Gabor transform, modulation spaces.}
	
	\begin{abstract}
		We investigate nonlinear, higher-order dispersive equations with measure (or even less regular) potentials and initial data with low regularity. Our approach is of distributional nature and relies on the phase space analysis (via Gabor wave packets) of the corresponding fundamental solution --- in fact, locating the modulation/amalgam space regularity of such generalized Fresnel-type oscillatory functions is a problem of independent interest in harmonic analysis. 
	\end{abstract}
	\maketitle

\section{Introduction}
This note concerns the wave packet analysis of higher-order dispersive equations with measures (or even more singular distributions) as potentials. In the first place, let us consider time-independent potentials and linear problems of the form
\begin{equation}\label{eq-intro-pbm-lin}
		\begin{cases}
			\frac{1}{2\pi i} \partial_t u = \mu(D)u + V(x)u \\
			u(0,x)=f(x),
		\end{cases} \qquad (t,x) \in \bR_+ \times \rd,
	\end{equation} where:
 \begin{itemize}
     \item $\mu(D)= \cF^{-1} \mu \cF$ is the Fourier multiplier with symbol $\mu \colon \rd\smo \to \bR$, which is a smooth, positively homogeneous function of degree $m\ge 2$ (in fact sufficiently large, depending on the regularity of $V$; see below), satisfying a non-degeneracy condition on the unit sphere (namely, $\det |\nabla^2 \mu(x)| \ne 0$ if $|x|=1$) to ensure the dispersive nature of the propagator $U_\mu(t) = e^{2\pi i t \mu (D)}$. 
     \item The potential $V$ is a temperate distribution that locally belongs to $\cF L^\infty(\rd)$ and is mildly  localized --- to be concrete, consider the case where $V$ is a finite, complex Borel measure on $\rd$. 
 \end{itemize}
A well-established approach to study (single or multi-source) point interaction models for the Schr\"odinger equation (viz., $\mu(x)=|x|^2$ and $u,f \in L^2(\rd)$) largely relies on tools from the spectral theory of operators on Hilbert spaces, including for instance self-adjoint extensions and form methods --- see for instance \cite{adami,albeverio_solv,brasche,christ,dellant1,dellant2,gesztesy} and the references therein. For higher-order equations as above, it seems more convenient to look at \eqref{eq-intro-pbm-lin} from a different angle, through the lens of distribution theory. Given that $V \in (\cF L^\infty(\rd))_{\rm loc}$, it is then natural to investigate the existence of solutions locally in $\cF L^1(\rd)$, so that the perturbation term $Vu$ is a well-defined distribution in $(\cF L^\infty(\rd))_{\text{loc}}$.

Standard techniques from the theory of dispersive PDE, combining Duhamel's integral representation of \eqref{eq-intro-pbm-lin} with a suitable fixed-point theorem (cf.\ \cite{tao book}), motivate a preliminary analysis of the unperturbed equation 
\begin{equation}\label{eq-intro-pbm-free}
					\begin{cases}
			\frac{1}{2\pi i} \partial_t u = \mu(D) u \\
			u(0,x)=f(x),
		\end{cases} \qquad (t,x) \in \bR_+ \times \rd.
	\end{equation} 
 
It is not difficult to see that the solution $u(t)$ is given by $u(t)=E(t,\cdot) \ast f$ for $t>0$, where $E \in C^\infty ((0,+\infty);\cS'(\rd))$ is the fundamental solution of \eqref{eq-intro-pbm-free}, hence satisfying $E(t,\cdot) \to \delta$ in $\cS'(\bR^d)$ as $t \to 0^+$. More in detail, we have \begin{equation} E(t,x)=t^{-1/m} E_\mu(t^{-1/m}x), \end{equation} where $E_\mu= \cF^{-1} F_\mu$ and $F_\mu(x) = e^{2\pi i \mu(x)}$. By comparison with the standard Schr\"odinger setting, it is natural to refer to $F_\mu$ as the \textit{generalized Fresnel function} associated with $\mu$. 

In light of the previous discussion, it would be desirable for the fundamental solution $E(t,\cdot)$ to belong to $\cF L^1(\rd)$ for any given $t>0$ (that is, $F_\mu\in \cF L^1(\bR^d)$), but this is not the case, as already evident from the Schr\"odinger equation. Nonetheless, our main result shows that $E_\mu$ is locally in $\cF L^1(\rd)$, uniformly with respect to translations in $\rd$. More precisely:
\begin{theorem}\label{thm-main}
		Let $\mu \colon \rd\smo \to \bR$ be a smooth, positively homogeneous function of degree $m \ge 2$, satisfying
		\begin{equation}
			\det |\nabla^2 \mu (x)| \ne 0 \quad \text{ if } \quad |x|=1. 
		\end{equation}
Let $F_\mu(x)=e^{2\pi i \mu(x)}$ be the generalized Fresnel function associated with $\mu$ and set $E_\mu = \cF^{-1} F_\mu$. For every $g \in \cS(\rd)$ we have 
\begin{equation}\label{eq-intro-emu}
   \sup_{y \in \rd} \| g(\cdot - y) E_\mu \|_{\cF L^1} < \infty.
\end{equation}
\end{theorem}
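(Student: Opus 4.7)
My plan is to recast \eqref{eq-intro-emu} as a modulation-space membership and then attack it by stationary/non-stationary-phase analysis on the short-time Fourier transform (STFT) of $F_\mu$. The left-hand side of \eqref{eq-intro-emu} is, by definition, the Wiener amalgam norm $\|E_\mu\|_{W(\cF L^1,L^\infty)(\rd)}$; since the Fourier transform realizes an isomorphism $M^{1,\infty}(\rd)\to W(\cF L^1,L^\infty)(\rd)$ (a consequence of the STFT covariance $|V_{\wh g}\wh f(\omega,y)|=|V_g f(-y,\omega)|$), the theorem is equivalent to $F_\mu\in M^{1,\infty}(\rd)$, i.e.
\[
  \sup_{\omega\in\rd}\int_{\rd}|V_g F_\mu(x,\omega)|\,dx<\infty.
\]
The advantage is that in $F_\mu$ the oscillation is fully explicit in the phase.

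I would then fix a window $g\in\cS(\rd)$ compactly supported in $B(0,1)$ (admissible by equivalence of modulation-space norms under change of window) and use the representation
\[
  V_g F_\mu(x,\omega)=e^{-2\pi i x\cdot\omega}\int\bar g(z)\,e^{2\pi i\,\psi_{x,\omega}(z)}\,dz,\qquad \psi_{x,\omega}(z):=\mu(x+z)-z\cdot\omega,
\]
for which $\nabla_z\psi_{x,\omega}=\nabla\mu(x+z)-\omega$ and $\nabla^2_z\psi_{x,\omega}=\nabla^2\mu(x+z)$. Homogeneity of $\mu$ and the non-degeneracy on the unit sphere give, for $|x|\ge 2$ and $|z|\le 1$, the size estimates $|\nabla\mu(x+z)|\asymp|x|^{m-1}$ and $|\det\nabla^2\mu(x+z)|\asymp|x|^{d(m-2)}$, with $\nabla^2\mu$ uniformly non-degenerate.

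The core is a case analysis in the phase-space point $(x,\omega)$.
\emph{(i)} On the ball $|x|\le 2$ I would combine the trivial bound $|V_g F_\mu(x,\omega)|\le\|g\|_{L^1}$ with a single non-stationary-phase integration by parts valid for $|\omega|$ large (since $|\nabla\mu(x+z)|$ is uniformly bounded on $\{|x+z|\le 3\}$), obtaining $|V_g F_\mu(x,\omega)|\lesssim (1+|\omega|)^{-N}$, whose $x$-integral over the ball is uniformly bounded in $\omega$.
\emph{(ii)} On $|x|\ge 2$ with $|\omega|$ not comparable to $|x|^{m-1}$, repeated integration by parts with $L=(2\pi i)^{-1}|\nabla\psi|^{-2}\nabla\psi\cdot\nabla_z$ will yield $|V_g F_\mu(x,\omega)|\lesssim\max(|x|^{m-1},|\omega|)^{-N}$ for every $N$, uniformly summable in $x$ and controlled in $\omega$.
\emph{(iii)} On the stationary region $|x|\ge 2$, $|\omega|\asymp|x|^{m-1}$, $\omega\approx\nabla\mu(x)$, the classical stationary-phase lemma with effective Hessian parameter $|x|^{m-2}$ gives $|V_g F_\mu(x,\omega)|\lesssim|x|^{-d(m-2)/2}$. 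Since $\nabla\mu$ is a local diffeomorphism, for each fixed $\omega$ the $x$-contributions concentrate in an $O(1)$-neighborhood of the finitely many solutions $x^*=x^*(\omega)$ of $\nabla\mu(x^*)=\omega$, each with $|x^*|\asymp|\omega|^{1/(m-1)}\ge 1$; summation gives a contribution $\lesssim|x^*|^{-d(m-2)/2}\lesssim 1$ uniformly in $\omega$ when $m\ge 2$.

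The main obstacle will be extracting \emph{uniform} constants from the stationary-phase estimate in case (iii): one has to check, through a rescaling $z\mapsto|x|^{-1}z$ and compactness of the unit sphere, that the remainder in the stationary-phase expansion depends only on $\mu$ and $g$ and not on the particular $(x,\omega)$, and that the number of stationary points of $\psi_{x,\omega}$ falling in $\supp g$ stays globally bounded. The interface $|x|,|\omega|=O(1)$, where the non-smoothness of $\mu$ at the origin would otherwise forbid direct stationary-phase arguments, is harmlessly absorbed into the trivial estimate of case (i). Adding the three pieces yields the $\omega$-uniform bound on $\|V_g F_\mu(\cdot,\omega)\|_{L^1}$, which completes the proof.
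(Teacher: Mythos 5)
Your overall strategy mirrors the paper's: reduce \eqref{eq-intro-emu} to $F_\mu\in M^{1,\infty}(\rd)$, fix a compactly supported window, and analyze $V_g F_\mu(x,\xi)$ via oscillatory-integral estimates, splitting $x$-space (for each fixed $\xi$) into a region far from the stationary set (non-stationary-phase decay, repeated integration by parts) and a residual neighbourhood of the stationary set. So far so good, and the paper does exactly this with the operator $L=(2\pi i)^{-1}|\nabla\varphi|^{-2}\nabla\varphi\cdot\nabla$.

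However, there is a genuine gap at the crux of your case~(iii). You need, uniformly in $\omega$, that the ``near-stationary'' region (where $|\nabla\mu(x+z)-\omega|$ is too small for non-stationary phase) has bounded Lebesgue measure; you assert this by saying that $\nabla\mu$ is a local diffeomorphism and so contributions ``concentrate in an $O(1)$-neighbourhood of the finitely many $x^*$''. That is precisely the point that needs a quantitative argument: the local diffeomorphism property only yields inverse estimates on unspecified neighbourhoods, and one has to rule out that the constants degenerate as $\omega$ varies. The paper supplies exactly this in Proposition~\ref{prop-cone}, a lower bound $|\nabla\mu(x)-\nabla\mu(y)|\ge C|x-y|(|x|^{m-2}+|y|^{m-2})$ which \emph{requires a localization to cones $\Gamma$} (e.g.\ $\mu(x)=x^3/3$ on $\bR$ has $\mu'(-x)=\mu'(x)$, so no global such bound holds). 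Combined with the triangle inequality, this gives a uniform diameter bound for $\{x\in\Gamma_j : |\xi-\nabla\mu(x)|\le A|x|^{m-2}\}$. You flagged ``uniformity of stationary-phase constants'' as the main obstacle, but that is the wrong one — the obstacle is precisely this uniform measure bound, and it is where the homogeneity plus cone-decomposition plus Euler's identity/non-degeneracy machinery is actually needed.

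Two secondary remarks. First, your use of the full stationary-phase lemma is unnecessary overhead: once the near-stationary set is known to have $O(1)$ measure, the trivial bound $|V_gF_\mu|\le\|g\|_{L^1}$ already closes the estimate (the paper does exactly this), whereas the $|x|^{-d(m-2)/2}$ decay you derive vanishes entirely in the threshold case $m=2$, so it cannot substitute for the measure bound. Second, your stated case split (ii) ``$|\omega|$ not comparable to $|x|^{m-1}$'' versus (iii) ``$|\omega|\asymp|x|^{m-1}$ and $\omega\approx\nabla\mu(x)$'' leaves out the configuration $|\omega|\asymp|x|^{m-1}$ with $\omega$ pointing away from $\nabla\mu(x)$; non-stationary phase still works there since $|\nabla\mu(x+z)-\omega|\gtrsim|x|^{m-1}$, but this should be stated. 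Finally, your IBP for $|x|\le 2$ and $|\omega|$ large is problematic because $\mu$ and its higher derivatives are singular at the origin; the paper sidesteps this by modifying $\mu$ near the origin to a globally smooth $\tilde\mu\in A^m(\rd)$ agreeing with $\mu$ for $|x|\ge 1$, but in fact the trivial $L^\infty$ bound on the compact region $|x|\le 2$ already suffices and no IBP is needed there.
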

  As a consequence (cf.\ Corollary \ref{cor-disp-est}), there exists a constant $C>0$ (depending on $\mu$ and $g$) such that \begin{equation}\label{eq-intro-emu bis}
   \| E(t,\cdot) \|_{W^{1,\infty}} \coloneqq \sup_{y \in \rd} \| g(\cdot - y) E(t,\cdot) \|_{\cF L^1} \leq C\max\{t^{-2d/m},t^{-d/m}\},\qquad t>0.
\end{equation}
In the case of the Schr\"odinger equation (where $\mu(x)=|x|^2$), it is enough to choose $g$ as a Gaussian function to realize by explicit computations that the bound \eqref{eq-intro-emu bis} is actually optimal both as $t\to0^+$ and $t\to+\infty$ --- see, e.g., \cite{CN_str}. 

The condition appearing in \eqref{eq-intro-emu} actually shows that $E_\mu$ belongs to the so-called \textit{Wiener amalgam space} $W^{1,\infty}(\rd)$ --- or, equivalently, that the generalized Fresnel function $F_\mu$ belongs to the \textit{modulation space} $M^{1,\infty}(\rd)$. This terminology comes from harmonic analysis in phase space \cite{cordoba,fefferman,fei_mod83,folland, gro_book,tataru_cont}, where wave packet decompositions of functions and operators are performed and the regularity of distributions is measured in terms of mixed Lebesgue norms of the corresponding coefficients. To be more precise, recall that \textit{Gabor wave packets} are obtained by means of phase space translations of a generating atom $g \in \cS(\rd)$, namely
\begin{equation}
    \pi(x,\xi)g(y) \coloneqq e^{2\pi i \xi \cdot y} g(y-x), \qquad (x,\xi) \in \rdd. 
\end{equation}
The Gabor (or Bargmann, or short-time Fourier) transform of $\cS'(\rd)$ is then the phase space representation defined by
\begin{equation}
    V_g f (x,\xi) \coloneqq \lan f, \pi(x,\xi) g\ran = \ird e^{-2\pi i \xi \cdot y} f(y) \overline{g(y-x)} \dd{y}. 
\end{equation}
It is then clear that \eqref{eq-intro-emu} means that $V_g E_\mu(x,\xi) \in L^\infty_x(L^1_\xi)$, which is in fact the characterizing property of the Wiener amalgam space $W^{p,q}(\rd)$ with $p=1$ and $q= \infty$. For general $1 \le p,q \le \infty$, these are Banach spaces of distributions endowed with norms $\| f \|_{W^{p,q}} = \|V_g f(x,\xi) \|_{L^q_x(L^p_\xi)}$, which are independent of the choice of $g$ in practice (i.e., up to equivalence constants). Interchanging the phase-space integration order captures the modulation space regularity: $M^{p,q}(\rd)$ is the space of distributions $f \in \cS'(\rd)$ such that $\|f\|_{M^{p,q}} = \|V_g f(x,\xi)\|_{L^q_\xi(L^p_x)}$ is finite for some (hence any) non-trivial $g \in \cS(\rd)$ --- for instance, $F_\mu \in M^{1,\infty}(\rd)$, since $W^{p,q}(\rd) = \cF M^{p,q}(\rd)$. A review of the basic results of Gabor analysis is provided in Section \ref{sec-gabor}. 

The analysis of PDE with low-regular coefficients via suitable wave packets has a long tradition --- some fundamental contributions are to be found in \cite{smith,staff,tataru_ii}. More generally, advanced techniques from harmonic analysis have proven to be highly effective to the study of higher-order dispersive PDE \cite{kato,kenig,ruzh1,ruzh2}. Although the literature on nonlinear dispersive PDE is extensive and marked by impressive, sophisticated wellposedness results in individual cases, our goal here is rather to obtain general results for non-specific higher-order dispersive PDE with fairly irregular potentials.

We also stress that the analysis of the amalgam/modulation space regularity of oscillating functions such as $F_\mu$ is a largely investigated problem in harmonic analysis, in view of the relevance to the continuity of Schr\"odinger-type semigroups beyond Lebesgue spaces --- see for instance \cite{benyi_unimod, cunanan, guo, miyachi,NPT_mult,zhao}, and also Corollary \ref{cor-disp-est} below. In particular, it is well known that the unimodular function $e^{2\pi i |x|^m}$ belongs to $W^{1,\infty}(\rd)$ if $m \le 2$, and this condition is in fact optimal. Theorem \ref{thm-main} fills the gap on the phase space regularity in the case where $m\ge 2$, showing that $e^{2\pi i |x|^m} \in M^{1,\infty}(\rd)$ in that case. The Fresnel function $e^{2\pi i |x|^2}$ thus marks the edge between the two regimes.

After this brief detour, let us come back to the original problem with fresh eyes --- that is, studying local and global wellposedness of the higher-order dispersive problem in \eqref{eq-intro-pbm-lin} in the context of modulation and amalgam spaces. We highlight that the analysis of nonlinear dispersive PDE has indeed benefited from ideas and techniques of Gabor analysis in the last decades --- a comprehensive bibliography is inconceivable, we just mention \cite{bhimani_oko,chai,CN_str,CN_JFA,CNR_rough,fei_navier,takaoka,trapasso} and the monographs \cite{benyi_book,CR_book,gro_book,NT_book,wang_book}, bearing witness to the broad scope of this approach. 

As already anticipated, we plan to ignite a standard Duhamel-type iterative scheme starting from Theorem \ref{thm-main}. To this aim, we note from \eqref{eq-intro-emu bis} that the map $(0,1] \ni t \mapsto \|E(t,\cdot)\|_{W^{1,\infty}}$ is integrable if $m>2d$. A closer inspection of the details involved in the iteration argument actually shows that more general potentials in the class of the amalgam spaces can be taken into account, also in the presence of analytic nonlinearities. Our main result can thus be stated in full generality as follows.

\begin{theorem}\label{thm-pde}
	Let $\mu$ be as in Theorem \ref{thm-main} with $m>2d$. Given $V \in L^\infty([0,1];W^{\infty,1}(\rd))$ and an entire real-analytic\footnote{Recall that $G \colon \bC \to \bC$ is said to be entire real-analytic if $G(z)$ has an absolutely convergent Taylor series for every $z \in \bC$ and $G(0)=0$. This condition is satisfied in particular by polynomials in $z,\bar z$, as well as power-type nonlinearities such as $G(z)= \lambda |z|^{2k}z$ (with $\lambda \in \bC$ and $k \in \bN$) or $G(z)=\lambda z^k$ (with $\lambda \in \bC$ and $k \in \bN_+$).} function $G \colon \bC \to \bC$, with $G(0)=0$, consider the Cauchy problem 
	\begin{equation}\label{eq-pbm-pde}
		\begin{cases}
			\frac{1}{2\pi i} \partial_t u = \mu(D)u + V(t,x)G(u)\\
			u(0,x)=f(x)
		\end{cases} \qquad (t,x) \in \bR_+ \times \rd.
	\end{equation}
	Consider $R>0$. For every $f \in B_R \coloneqq \{f\in \cF L^1(\rd) : \|f\|_{\cF L^1} \le R\}$ there exist $T=T(R)\in (0,1]$ and a unique solution $u \in C^0([0,T];W^{1,\infty}(\rd))$, and the correspondence $f\mapsto u$ from $B_R$ to $C^0([0,T];W^{1,\infty}(\rd))$ is Lipschitz continuous. 
	
	Moreover, if $G(u)=u$ and $V \in L^\infty([0,+\infty);W^{\infty,1}(\rd))$, for every initial datum $f \in \cF L^1(\rd)$  there exists a unique global solution $u \in C^0([0,+\infty);W^{1,\infty}(\rd))$.
\end{theorem}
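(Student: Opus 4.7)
The plan is to recast \eqref{eq-pbm-pde} as the Duhamel integral equation
\[
\Phi(u)(t) \coloneqq U_\mu(t)f + 2\pi i \int_0^t U_\mu(t-s)\bigl[V(s) G(u(s))\bigr]\,ds,
\]
and to solve the fixed-point equation $u = \Phi(u)$ by Banach contraction on a suitable closed ball of $X_T = C^0([0,T];W^{1,\infty}(\rd))$, endowed with the sup norm. Three ingredients drive the analysis: a homogeneous bound, a dispersive bound, and a multiplication bound for the nonlinearity.

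First, since $U_\mu(t)$ is Fourier multiplication by the unimodular symbol $e^{2\pi i t\mu}$, it is isometric on $\cF L^1$; combined with the trivial embedding $\cF L^1 \hookrightarrow W^{1,\infty}$ (which follows by applying Young's inequality to $\widehat{g(\cdot-y)f} = \widehat{g(\cdot-y)} \ast \hat f$), this yields $\|U_\mu(t) f\|_{W^{1,\infty}} \le C\|f\|_{\cF L^1} \le CR$. Second, the decay bound $\|E(t,\cdot)\|_{W^{1,\infty}} \le C\max\{t^{-2d/m}, t^{-d/m}\}$ of Corollary \ref{cor-disp-est}, combined with the convolution inequality $\|a \ast b\|_{W^{1,\infty}} \le C \|a\|_{W^{1,\infty}}\|b\|_{W^{\infty,1}}$ (which, via Fourier duality, reduces to Sj\"ostrand's theorem on the pointwise multiplier action of $M^{\infty,1}$ on $M^{1,\infty}$), upgrades to
\[
\|U_\mu(t) h\|_{W^{1,\infty}} \le C\max\{t^{-2d/m}, t^{-d/m}\} \|h\|_{W^{\infty,1}}.
\]
Third, from the lattice characterization of $W^{\infty,1}$ as locally $L^\infty$ with $\ell^1$-summable local sup norms one obtains $\|V w\|_{W^{\infty,1}} \le C\|V\|_{W^{\infty,1}}\|w\|_{L^\infty}$. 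Using $W^{1,\infty} \hookrightarrow L^\infty$ (which is immediate from the defining norm) and the entire power series $G(z)=\sum_{k \ge 1} a_k z^k$, one deduces $\|G(u)\|_{L^\infty} \le \tilde G(C\|u\|_{W^{1,\infty}})$ with $\tilde G(r) = \sum_{k\ge 1} |a_k| r^k$ convergent on $[0,+\infty)$; a mean-value estimate provides the matching Lipschitz control.

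Assembling these three ingredients and using the hypothesis $m > 2d$ to ensure $\int_0^T (T-s)^{-2d/m}\,ds \lesssim T^{1-2d/m} < \infty$ for $T\in(0,1]$, we obtain $\|\Phi(u)\|_{X_T} \le CR + C T^{1-2d/m} \|V\|_{L^\infty_t W^{\infty,1}} \tilde G(C\|u\|_{X_T})$ and a parallel contraction estimate. Choosing the ball radius $R' \approx 2CR$ and $T = T(R)$ small enough makes $\Phi$ a contraction, yielding local existence, uniqueness (promoted to the full $X_T$ by a standard bootstrap on small subintervals), and Lipschitz dependence of $f \mapsto u$. For the global linear case $G(u) = u$, the same estimates furnish the weakly singular Volterra inequality
\[
\|u(t)\|_{W^{1,\infty}} \le CR + C\|V\|_{L^\infty_t W^{\infty,1}} \int_0^t \max\{(t-s)^{-2d/m},(t-s)^{-d/m}\} \|u(s)\|_{W^{1,\infty}}\,ds,
\]
and a Henry-type Gronwall lemma for weakly singular kernels (equivalently, convergence of the Neumann series $u = \sum_n u_n$ with $u_{n+1}(t) = 2\pi i \int_0^t U_\mu(t-s)[V u_n(s)]\,ds$) delivers a global a priori bound on every finite interval, hence extends the solution to $C^0([0,+\infty);W^{1,\infty})$.

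The main obstacle is the matched-space bookkeeping in the dispersive step: the kernel $E(t,\cdot)$ naturally lives in $W^{1,\infty}$, whereas the nonlinear term $V G(u)$ naturally lives in $W^{\infty,1}$, and bridging them rests on the non-trivial algebra structure of the Sj\"ostrand class $M^{\infty,1}$ acting pointwise on $M^{1,\infty}$. A secondary delicate point is the entire analytic nonlinearity: convergence of $\tilde G$ on all of $[0,+\infty)$ and stabilization of the Picard iterates inside a fixed ball of $X_T$ both require the careful choice of ball radius and time window.
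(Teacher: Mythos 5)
Your overall architecture matches the paper's: Duhamel reformulation, contraction mapping on a ball of $C^0([0,T];W^{1,\infty}(\rd))$, the free bound via $\cF L^1\hookrightarrow W^{1,\infty}$, the dispersive estimate of Corollary \ref{cor-disp-est} combined with the convolution/module property, and integrability of $t^{-2d/m}$ from $m>2d$; the global linear case by iteration is also in the same spirit (the paper simply reruns the fixed point on intervals of fixed length $T$, since the contraction constant no longer depends on the datum, rather than invoking a singular Gronwall lemma).

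However, there is a genuine gap in your treatment of the product $V\,G(u)$. You claim $\|Vw\|_{W^{\infty,1}}\le C\|V\|_{W^{\infty,1}}\|w\|_{L^\infty}$ based on a ``lattice characterization of $W^{\infty,1}$ as locally $L^\infty$ with $\ell^1$-summable local sup norms''. That characterization is wrong for the space used here: with the paper's convention, $W^{\infty,1}(\rd)$ is the amalgam with \emph{local component} $\cF L^\infty$ (pseudomeasures) and global $L^1$ behaviour --- it contains $\delta$ and all finite measures, which are nowhere locally $L^\infty$. For a general $V$ in this space, multiplication by a mere $L^\infty$ (or even $C_b$) function is not bounded: the pointwise multipliers of $\cF L^\infty$ must locally lie in $\cF L^1$, not just $L^\infty$. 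Consequently, controlling $G(u)$ only in $L^\infty$ via $W^{1,\infty}\hookrightarrow L^\infty$ is insufficient to make sense of, let alone estimate, $V\,G(u)$ in $W^{\infty,1}$. The correct step --- and the one the paper uses --- is the module property $W^{\infty,1}(\rd)\cdot W^{1,\infty}(\rd)\to W^{\infty,1}(\rd)$ from Proposition \ref{prop-mod-was}(3), which requires $G(u)\in W^{1,\infty}(\rd)$. This in turn needs the Banach algebra property of $W^{1,\infty}(\rd)$ together with the analytic superposition estimate $\|G(u)-G(v)\|_{W^{1,\infty}}\le C_R\|u-v\|_{W^{1,\infty}}$ on balls (cf.\ \cite{CN_fou}), obtained by summing the majorant series of $G$ in the algebra norm. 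You correctly flag the algebra structure as the crux in your closing remarks, but the estimate you actually write bypasses it and would fail; replacing the $L^\infty$ bound on $G(u)$ by the $W^{1,\infty}$ algebra bound repairs the argument and reduces it to the paper's proof.
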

Primary examples of potentials to which Theorem \ref{thm-pde} applies are complex finite measures, which belong to $W^{\infty,1}(\bR^d)$ according to \cite[Proposition 3.4]{NT_cmp}. In fact, certain higher-order distributions are encompassed as well --- such as (distributional) derivatives of order $\leq (d-1)/2$ of the surface measure of any compact smooth hypersurface of $\bR^d$, with non-zero Gaussian curvature at every point. See Example \ref{ex-pot} below for further details in this connection. 

We emphasize that the solution $u$ of the problem \eqref{eq-pbm-pde} has the same spatial local regularity of the initial datum. To the best of our knowledge, this is the first time that point interactions can be approached via a phase space analysis angle. 

It is natural to wonder whether the assumption $m>2d$ can be relaxed, at least in the linear case. One may expect that such improvement can be possibly obtained at the price of a regularity gain at the level of potentials, which in turn could be balanced by the unlocking of less regular initial data. In fact, appropriate adjustments to the proof of Theorem \ref{thm-pde} allowed us to obtain the following result. 

\begin{theorem}\label{thm-pde-lowreg}
	Let $V \in  L^\infty([0,+\infty);W^{p,1}(\rd))$ for some $1 \le p \le \infty$, and let $\mu$ be as in Theorem \ref{thm-main} with $m\geq 2$ and $m > 2d/p'$, where $p'$ denotes the H\"older conjugate index associated with $p$. Consider the Cauchy problem	
	\begin{equation}
		\begin{cases}
			\frac{1}{2\pi i} \partial_t u = \mu(D) u + V(t,x)u\\
			u(0,x)=f(x),
		\end{cases} \qquad (t,x) \in \bR_+ \times \rd.
	\end{equation}
	For every initial datum $f \in \cF L^q(\rd)$ with $1 \le q \le p'$ there exists a unique solution $u \in C^0([0,+\infty);W^{q,\infty}(\rd))$. 
\end{theorem}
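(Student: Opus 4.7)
The strategy is to adapt the Duhamel-based contraction scheme underlying Theorem \ref{thm-pde}, tracking the modulation/amalgam-space indices carefully so as to capture the sharp threshold $m>2d/p'$. I would recast the Cauchy problem as
$$u(t)=U_\mu(t)f+2\pi i\int_0^t U_\mu(t-s)\bigl[V(s)u(s)\bigr]\,ds$$
and seek a fixed point in $C^0([0,T];W^{p',\infty}(\rd))$ for small $T>0$, with the global statement then following by linearity and iteration. Note that for $f\in\cF L^q$, the embeddings $\cF L^q\hookrightarrow W^{q,\infty}\hookrightarrow W^{p',\infty}$ hold (the first from $\cF[g(\cdot-y)f]=M_{-y}\hat g\ast\hat f$ combined with $\|\hat g\|_{L^1}<\infty$, the second from $q\le p'$), so the scheme naturally produces a solution in the ambient space $W^{p',\infty}$; the finer $W^{q,\infty}$ regularity is then recovered term by term from the Duhamel formula by exploiting the $\cF L^q$-isometry of $U_\mu(t)$ (the Fourier symbol being unimodular).

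The contraction rests on three ingredients. First, the embedding $\cF L^q\hookrightarrow W^{q,\infty}$ combined with the $\cF L^q$-isometry of $U_\mu(t)$ bounds the homogeneous term by $\|U_\mu(t)f\|_{W^{q,\infty}}\le C\|f\|_{\cF L^q}$, uniformly in $t\ge 0$. Second, the product estimate
$$\|V(s)u(s)\|_{W^{\infty,1}(\rd)}\le C\|V(s)\|_{W^{p,1}(\rd)}\|u(s)\|_{W^{q,\infty}(\rd)},\quad q\le p',$$
obtained by localizing with a Schwartz window $g=g_1g_2$ and invoking, pointwise in $y$, the local Young-type inclusion $\cF L^p\cdot\cF L^q\hookrightarrow\cF L^\infty$ (equivalent to $L^p\ast L^q\hookrightarrow L^\infty$ on the Fourier side, valid under $1/p+1/q\le 1$), followed by the global H\"older estimate $L^1\cdot L^\infty\hookrightarrow L^1$. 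Third, the convolution estimate
$$\|E(t-s,\cdot)\ast h\|_{W^{p',\infty}}\le C\|E(t-s,\cdot)\|_{W^{p',\infty}}\|h\|_{W^{\infty,1}},$$
reflecting the multiplier property of the Sj\"ostrand class $M^{\infty,1}$ on the modulation space $M^{p',\infty}$ (translated via Fourier to convolution on the amalgam side).

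The crux of the proof is the sharp time-decay estimate
$$\|E(t,\cdot)\|_{W^{p',\infty}(\rd)}\le Ct^{-2d/(mp')},\qquad t\in(0,1],$$
which I would derive by complex interpolation between the endpoint bounds $\|E(t,\cdot)\|_{W^{1,\infty}}\lesssim t^{-2d/m}$ (furnished by Theorem \ref{thm-main} together with the dilation identity $E(t,x)=t^{-d/m}E_\mu(t^{-1/m}x)$) and the uniform bound $\|E(t,\cdot)\|_{W^{\infty,\infty}}\le C$, which itself follows from Young's inequality since $\hat E=e^{2\pi it\mu(\xi)}$ has modulus one. Combining this decay with the two estimates from the previous paragraph yields
$$\bigl\|U_\mu(t-s)[V(s)u(s)]\bigr\|_{W^{p',\infty}}\le C(t-s)^{-2d/(mp')}\|V(s)\|_{W^{p,1}}\|u(s)\|_{W^{q,\infty}},$$
and the $s$-integrability on $[0,T]$ holds precisely because $m>2d/p'$, enabling the contraction to close.

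The main obstacle, and the reason one can lower the threshold from $m>2d$ (as in Theorem \ref{thm-pde}) to the sharper $m>2d/p'$, lies in the interpolation argument yielding the integrable singularity $t^{-2d/(mp')}$: it must balance the full strength of the $W^{1,\infty}$-regularity of $E$ (provided by Theorem \ref{thm-main}) against the trivial $W^{\infty,\infty}$-bound, and must be paired with the Sj\"ostrand multiplier inequality in the right way. Once this is in place, both the fixed-point closure in $W^{p',\infty}$ and the descent to the finer target $W^{q,\infty}$ for $q\le p'$ (via the $\cF L^q$-isometry of $U_\mu(t)$ applied to the Duhamel formula) are routine.
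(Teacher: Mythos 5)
Your key analytic ingredient coincides with the paper's: the interpolated dispersive estimate $\|E(t,\cdot)\|_{W^{p',\infty}}\lesssim t^{-2d/(mp')}$ for $0<t\le 1$, obtained by interpolating the $W^{1,\infty}$-bound coming from Theorem \ref{thm-main} (plus the dilation identity) against the trivial uniform $W^{\infty,\infty}$-bound; this is precisely where the threshold $m>2d/p'$ enters, and the Duhamel contraction is also the paper's scheme. However, there is a genuine gap in how you reach the target space $W^{q,\infty}(\rd)$. Your product estimate sends $V(s)u(s)$ only into $W^{\infty,1}(\rd)$, and convolving $W^{\infty,1}$ with $E(t-s)\in W^{p',\infty}$ lands you back in $W^{p',\infty}$; so your fixed point lives in $C^0([0,T];W^{p',\infty}(\rd))$ and nothing in the scheme pushes the Duhamel term below that. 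The proposed ``descent'' to $W^{q,\infty}$ via the $\cF L^q$-isometry of $U_\mu(t)$ does not apply to the inhomogeneous term: the integrand $V(s)u(s)$ is not in $\cF L^q(\rd)$ (it is merely in an amalgam space with no global $L^q$ Fourier control), so the isometry has nothing to act on. When $q<p'$ you therefore only obtain existence in the strictly larger space $W^{p',\infty}$, which is weaker than the claim. (A further minor slip: the condition for $\cF L^p\cdot\cF L^q\hookrightarrow\cF L^\infty$ locally is $1/p+1/q\ge 1$, not $\le 1$.)

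The fix --- and the paper's actual route --- is to retain the local regularity of the product: setting $1/r=1/p+1/q-1$ (so $1\le r\le\infty$ precisely because $q\le p'$), one has $\|V(s)u(s)\|_{W^{r,1}}\lesssim\|V(s)\|_{W^{p,1}}\|u(s)\|_{W^{q,\infty}}$ together with the convolution relation $W^{r,1}(\rd)\ast W^{p',\infty}(\rd)\hookrightarrow W^{q,\infty}(\rd)$. This closes the contraction directly in $C^0([0,T];W^{q,\infty}(\rd))$ with the same integrable singularity $|t-s|^{-2d/(mp')}$, and the remaining steps of your outline (bound on the homogeneous term, smallness of $T$, iteration to global time by linearity) then go through as you describe.
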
 

We conclude by highlighting that some concrete choices of potentials that fit the assumptions of Theorem \ref{thm-pde-lowreg} are discussed in Example \ref{ex-pot} below. Remarkably, the latter include cropped Coulomb-type singularities and measures associated with certain low-dimensional smooth manifolds (or even fractal sets), the prime instance being the surface measure of the sphere $S^{d-1}\subset \rd$. 

\section{Preliminaries}
\subsection{Notation}
The inner product on $L^2(\rd)$ is defined by
\begin{equation}    \lan f,g \ran = \ird f(y) \overline{g(y)}dy, \qquad f,g \in L^2(\rd).
\end{equation}
This agreement extends to the duality pairing between a temperate distribution $f \in \cS'(\rd)$ and a function $g \in \cS(\rd)$ in the Schwartz class by setting $\lan f,g \ran = f(\overline{g})$.

The definition of the Fourier transform of $f \in L^2(\rd)$ used in this note is
\[  \wh{f}(\xi)=\cF(f)(\xi) = \ird e^{-2\pi i \xi\cdot x}f(x)dx, \]
with obvious extensions to temperate distributions. For $1\leq p\leq\infty$ we denote by $\cF L^p(\bR^d)$ the space of temperate distributions in $\bR^d$ whose (inverse) Fourier transform belongs to $L^p$, with the norm $\|f\|_{\cF L^p}=\|\wh{f}\|_{L^p}$. 

Given $A,B \in \bR$, we write $A \lesssim_\lambda B$ as a shorthand for the inequality $A \le C(\lambda) B$, where $C(\lambda)>0$ is a constant that depends on the parameter $\lambda$. We also write $A \asymp_\lambda B$ to mean that both $A \lesssim_\lambda B$ and $B \lesssim_\lambda A$ hold. 

The symbol $\lan x \ran$ stands for the inhomogeneous norm of $x \in \rd$, that is $\lan x \ran \coloneqq (1+|x|^2)^{1/2}$. 

\subsection{Basics of Gabor analysis} \label{sec-gabor} 
We recall here some elementary facts of time-frequency (alias phase-space) analysis. Proofs and further details can be found in the monographs \cite{benyi_book,CR_book,gro_book,NT_book}. 

Given $x,\xi \in \rd$, the translation and modulation operators $T_x$ and $M_\xi$, acting on $f \colon \rd \to \bC$, are defined by
\begin{equation}
    T_x f(y) \coloneqq f(y-x), \qquad M_\xi f(y) \coloneqq e^{2\pi i \xi \cdot y}f(y).
\end{equation}
The composition $\pi(x,\xi) \coloneqq M_\xi T_x$ is said to be the phase space shift of $f$ along $(x,\xi) \in \rd \times \wh{\rd} \simeq \rdd$.

The short-time Fourier transform (STFT), or Gabor transform, of $f \in \cS'(\rd)$ with respect to the window $g \in \cS(\rd)\smo$ is defined by
\begin{equation}
    V_g f(x,\xi) \coloneqq \lan f,\pi(x,\xi) g \ran = \cF (f \cdot \overline{T_x g})(\xi), \qquad (x,\xi) \in \rdd.
\end{equation} The Gabor transform is a continuous map $\cS(\rd)\times \cS(\rd) \ni f\times g \to V_gf \in \cS(\rdd)$, hence satisfying $|V_g f(x,\xi)| \lesssim_N \lan (x,\xi) \ran^{-N}$ for every $N \in \bN$. 

Given $1 \le p,q \le \infty$, the modulation space $M^{p,q}(\rd)$ is the space of all the distributions $f \in \cS'(\rd)$ such that, for a given $g \in \cS(\rd)\smo$, the mixed Lebesgue norm
\begin{equation}\label{eq:modsp-norm}
    \| f \|_{M^{p,q}_{(g)}} \coloneqq \| V_g f (x,\xi) \|_{L^q_\xi (L^p_x)} = \Big( \ird \Big( \ird |V_g f(x,\xi)|^p dx \Big)^{q/p} d\xi \Big)^{1/q}
\end{equation} is finite --- with obvious modifications if $p,q=\infty$. When $q=p$ we write $M^p(\rd)$ instead of $M^{p,p}(\rd)$. 

It is well known that $(M^{p,q}(\rd), \|\cdot\|_{M^{p,q}_{(g)}})$ is a Banach space for every $p,q$, and the norms $\|\cdot \|_{M^{p,q}_{(g)}}$ and $\|\cdot \|_{M^{p,q}_{(\gamma)}}$ are equivalent for every choice of a different window $\gamma \in \cS(\rd)\smo$ --- we thus omit the dependence on the window unless relevant, and write $\|\cdot\|_{M^{p,q}}$ from now on. 

Swapping the integration order in \eqref{eq:modsp-norm} yields
\begin{equation}\label{eq:wasp-norm}
    \| f \|_{W^{p,q}_{(g)}} \coloneqq \| V_g f (x,\xi) \|_{L^q_x (L^q_\xi)} = \Big( \ird \Big( \ird |V_g f(x,\xi)|^p d\xi \Big)^{q/p} dx \Big)^{1/q}. 
\end{equation}
The space of distributions $f \in \cS'(\rd)$ such that $\|f\|_{W^{p,q}_{(g)}}<\infty$ is again a Banach space, called the Wiener amalgam space $W^{p,q}(\rd)$, and different choices of windows in \eqref{eq:wasp-norm} result in equivalent norms on $W^{p,q}(\rd)$. We write $W^p (\rd)$ instead of $W^{p,p}(\rd)$. 

We list below some properties of modulation and Wiener amalgam spaces that are needed in the note. 

\begin{proposition}\label{prop-mod-was}
	Consider $1 \le p,p_1,p_2,q,q_1,q_2 \le \infty$. The following properties hold. 
	\begin{enumerate}
				\item We have the identification $W^{p,q}(\rd) = \cF M^{p,q}(\rd)$, that is 
		\begin{equation}
			f \in M^{p,q}(\rd) \quad \Longleftrightarrow \quad \wh{f} \in W^{p,q}(\rd).
		\end{equation} Moreover, $M^p(\rd) = W^p(\rd)$. 
		
		\item The pointwise multiplication is continuous $M^{p_1,q_1}(\rd) \times M^{p_2,q_2}(\rd) \to M^{p,q}(\rd)$ if and only if \begin{equation}
		\frac{1}{p_1}+\frac{1}{p_2}\ge \frac{1}{p}, \qquad \frac{1}{q_1}+\frac{1}{q_2}\ge 1+ \frac{1}{q}.
		\end{equation}
		In particular, $M^{\infty,1}(\rd)$ is a Banach algebra for pointwise multiplication and every modulation space is a Banach module over it. 
		
		\item The pointwise multiplication is continuous $W^{p_1,q_1}(\rd) \times W^{p_2,q_2}(\rd) \to W^{p,q}(\rd)$ if and only if \begin{equation}
	\frac{1}{p_1}+\frac{1}{p_2}\ge 1+\frac{1}{p}, \qquad \frac{1}{q_1}+\frac{1}{q_2}\ge \frac{1}{q}.
\end{equation}
In particular, $W^{1,\infty}(\rd)$ is a Banach algebra for pointwise multiplication and every Wiener amalgam space is a Banach module over it. 

		\item Given $\lambda \ne 0$ consider the dilation $f_\lambda(x)\coloneqq f(\lambda x)$. There exists a constant $C>0$ such that
		\begin{equation}
			\| f_\lambda \|_{M^{p,q}} \le C |\lambda|^{-d(1/p-1/q+1)}(1+\lambda^2)^{d/2} \|f\|_{M^{p,q}}. 
		\end{equation}
		
	\item Let $T$ be a linear operator and $A_1,A_2>0$ constants such that
	\begin{equation}
		\|T f \|_{M^{p_1,q_1}} \le A_1 \|f\|_{M^{p_1,q_1}}, \qquad \forall \, f \in M^{p_1,q_1}(\rd),
	\end{equation} 
	\begin{equation}
		\|T f \|_{M^{p_2,q_2}} \le A_2 \|f\|_{M^{p_2,q_2}}, \qquad \forall \, f \in M^{p_2,q_2}(\rd). 
	\end{equation}
	Then, for all $0<\theta<1$ and $1 \le p, q \le \infty$ such that 
	\begin{equation}
		\frac{1}{p} = \frac{1-\theta}{p_1}+\frac{\theta}{p_2}, \qquad \frac{1}{q} = \frac{1-\theta}{q_1}+\frac{\theta}{q_2},
	\end{equation} there exists a constant $C>0$ independent of $T$ such that 
	\begin{equation}
		\|T f \|_{M^{p,q}} \le CA_1^{1-\theta}A_2^\theta \|f\|_{M^{p,q}}, \qquad \forall \, f \in M^{p,q}(\rd). 
	\end{equation}
	
	\item Given $X \subseteq \cS'(\rd)$, set $X_{\text{comp}} \coloneqq \{ u \in X : \supp(u) \text{ is a compact subset of } \rd\}$. Then $(M^{p,q}(\rd))_{\text{comp}} = (\cF L^q(\rd))_{\text{comp}}$ and $(W^{p,q}(\rd))_{\text{comp}} = (\cF L^p(\rd))_{\text{comp}}$. 
	\end{enumerate}
	
\end{proposition}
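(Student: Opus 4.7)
All six statements are foundational facts in Gabor analysis, documented in the monographs \cite{benyi_book,CR_book,gro_book,NT_book}. My plan is not to reprove them from scratch but to indicate, for each item, the key identity that I would use, deferring standard manipulations to the references.

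For item (1), I would exploit the STFT covariance identity $V_g\widehat f(\xi,\eta)=e^{-2\pi i\xi\cdot\eta}V_{\widehat g}f(-\eta,\xi)$: substituting into the mixed-norm definition \eqref{eq:modsp-norm} and changing variables converts $W^{p,q}$ of $\widehat f$ into $M^{p,q}$ of $f$, and for $p=q$ the equality $M^p=W^p$ reduces to Fubini. For item (4), I would apply the dilation rule $V_g(f_\lambda)(x,\xi)=|\lambda|^{-d}V_{g_{1/\lambda}}f(\lambda x,\lambda^{-1}\xi)$, then change variables in the mixed norm; the explicit power of $|\lambda|$ arises from the Jacobian, while the $(1+\lambda^2)^{d/2}$ factor represents the cost of replacing the scaled window $g_{1/\lambda}$ by the fixed window $g$, controlled by the window-change inequality $\|f\|_{M^{p,q}_{(\gamma_1)}}\lesssim \|\gamma_2\|_{M^1}\|f\|_{M^{p,q}_{(\gamma_2)}}$ together with a direct bound $\|g_{1/\lambda}\|_{M^1}\lesssim (1+\lambda^2)^{d/2}$.

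Item (5) I would obtain as a direct instance of Calder\'on complex interpolation for mixed-norm Lebesgue spaces on $\rdd$, applied to the analytic family $z\mapsto V_g T^z f$; no new idea is required since the modulation norm is already such a mixed $L^pL^q$ norm of the STFT, and the constant $C$ absorbs the equivalence between $M^{p,q}$ and the Lebesgue norm on the STFT image. Item (6) I would derive from the observation that, for $u\in\cS'(\rd)$ with compact support and $g\in\cS(\rd)$, the STFT $V_g u(\cdot,\xi)$ is Schwartz in the first variable for every $\xi$; hence the $L^p_x$ slice in \eqref{eq:modsp-norm} is controlled uniformly, reducing the modulation norm to an $L^q_\xi$ expression equivalent to $\|\widehat u\|_{L^q}$ on compactly supported distributions. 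The corresponding statement for $W^{p,q}_{\mathrm{comp}}$ would then follow via item (1).

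The bulk of the work lies in items (2) and (3), the product estimates. The sufficiency direction for (2) rests on the frequency-side convolution identity $V_{g_1 g_2}(f_1 f_2)(x,\xi)=(V_{g_1}f_1\ast_\xi V_{g_2}f_2)(x,\xi)$: H\"older in the $x$ variable and Young's inequality in the $\xi$ variable produce exactly the stated exponent constraints. Item (3) I would then deduce from (2) by Fourier duality, since item (1) swaps the roles of $p$ and $q$ and turns pointwise multiplication into convolution and vice versa. The hard part will be the necessity direction: one has to exhibit, for each violated exponent relation, a family of modulated, translated, dilated Gaussians $(f_{1,n},f_{2,n})$ that saturates H\"older/Young simultaneously and forces the product inequality to fail; these classical counterexamples can be imported essentially verbatim from \cite{CR_book,gro_book}.
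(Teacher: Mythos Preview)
The paper does not prove this proposition at all: it is presented as a compilation of standard facts, with the blanket sentence ``Proofs and further details can be found in the monographs \cite{benyi_book,CR_book,gro_book,NT_book}'' at the beginning of Section~\ref{sec-gabor}. Your proposal therefore already goes beyond the paper by sketching the key identities behind each item, and those sketches are essentially correct and in line with the standard arguments in the cited sources.

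One small correction is worth flagging. In item~(6) you write that the $W^{p,q}_{\mathrm{comp}}$ statement ``would then follow via item~(1)''. It does not: item~(1) identifies $W^{p,q}$ with $\cF M^{p,q}$, but the Fourier transform does not preserve compact support, so you cannot transport the $M^{p,q}_{\mathrm{comp}}$ result through $\cF$. The correct argument is parallel to (not a corollary of) the $M^{p,q}$ case: for $f$ and $g$ compactly supported, $V_g f(x,\xi)$ vanishes for $x$ outside a fixed compact set, so the outer $L^q_x$ integration in \eqref{eq:wasp-norm} is harmless, and the inner $L^p_\xi$ norm $\|\cF(f\,\overline{T_x g})\|_{L^p}$ is related to $\|\wh f\|_{L^p}$ via a partition-of-unity argument. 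This is a minor slip in an otherwise sound outline.
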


\section{Proof of Theorem \ref{thm-main}}
Let us start by introducing the central objects of this note, namely generalized Fresnel functions of order $m$; cf.\ {\cite{BBR}}. 
	\begin{definition}
		Given $m \in \bR$, the class $A^m(\rd)$ consists of smooth functions $\mu \in C^\infty(\rd;\bR)$ such that, for all $\alpha \in \bN^d$, 
		\begin{equation}
			\sup_{x \in \rd} \lax^{|\alpha|-m}|\partial^\alpha \mu (x)| < \infty. 
		\end{equation}
		Given $\mu \in A^m(\rd)$, the corresponding Fresnel function is defined by $\Fmu(x)\coloneqq e^{2\pi i \mu(x)}$. 
	\end{definition}

The following result elucidates some phase space features of the Fresnel function $F_\mu$ in terms of the corresponding Gabor transform. 
	
	\begin{proposition}\label{prop-Fm}
		Let $\Fmu$ be the Fresnel function associated with $\mu \in A^m(\rd)$, $m\ge 2$. Let $g \in C^\infty_{\mathrm{c}}(\rd)$ be such that $\supp(g) \subseteq B=\{ y \in \rd : |y|\le 1\}$. 
		
		\begin{enumerate}[label=(\roman*)]\setlength\itemsep{0.5cm}
			\item We have 
			\begin{equation}\label{eq-VgFm-pointw}
				|V_g \Fmu (x,\xi)| \le \|g\|_{L^1}, \qquad (x,\xi) \in \rdd.
			\end{equation}
			\item There exists $A>0$ such that, for all $N\ge 0$, 
			\begin{equation}\label{eq-VgFm-decay}
				\text{ if } \quad |\xi-\nabla \mu(x)| \ge A |x|^{m-2} \quad \text{ then } \quad |V_g \Fmu (x,\xi)| \le C \lan \xi - \nabla \mu(x) \ran^{-N},
			\end{equation} for some $C>0$ that depends on $N$. 
		\end{enumerate}
	\end{proposition}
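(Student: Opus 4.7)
\emph{Proof plan.} Statement (i) is the elementary bound obtained by taking absolute values in the definition of the short-time Fourier transform: since $|F_\mu(y)|=1$,
\begin{equation}
|V_g F_\mu(x,\xi)| = \left| \ird e^{2\pi i(\mu(y) - \xi \cdot y)} \overline{g(y-x)}\, dy \right| \le \ird |g(y-x)|\, dy = \|g\|_{L^1}.
\end{equation}

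For statement (ii), I would set $\Phi(y) \coloneqq \mu(y) - \xi \cdot y$ and $\lambda \coloneqq |\xi - \nabla \mu(x)|$ and proceed via non-stationary phase. The first task is to show that $|\nabla \Phi|$ stays of order $\lambda$ throughout $\supp g(\cdot - x) \subseteq \overline{B(x,1)}$. Indeed, Taylor's theorem combined with the class bound $|\nabla^2 \mu(z)| \lesssim \lan z \ran^{m-2}$ yields
\begin{equation}
|\nabla \mu(y) - \nabla \mu(x)| \lesssim \lan x \ran^{m-2} \lesssim 1 + |x|^{m-2} \qquad \text{for } |y-x|\le 1.
\end{equation}
Writing $\nabla \Phi(y) = -(\xi - \nabla \mu(x)) + (\nabla \mu(y) - \nabla \mu(x))$ and using the triangle inequality, one can thus choose $A$ large enough (with the small-$\lambda$ regime $\lambda \lesssim 1$ disposed of directly by part (i), which at that scale already gives $|V_g F_\mu| \lesssim_N \lan \lambda\ran^{-N}$) so that $|\nabla \Phi(y)| \ge \lambda/2$ throughout the support.

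Next, I would invoke the classical integration-by-parts device: the operator $L \coloneqq (2\pi i |\nabla \Phi|^2)^{-1}\, \nabla \Phi \cdot \nabla$ satisfies $L e^{2\pi i \Phi} = e^{2\pi i \Phi}$, so that after $N$ iterations
\begin{equation}
V_g F_\mu(x,\xi) = \ird e^{2\pi i \Phi(y)}\, (L^{\mathrm{t}})^N [\overline{g(y-x)}]\, dy.
\end{equation}
The core estimate is then the pointwise bound $|(L^{\mathrm{t}})^N[\overline{g(y-x)}]| \lesssim_N \lambda^{-N}$ on the support of $g(\cdot - x)$, to be established by induction on $N$. Each application of $L^{\mathrm{t}}$ produces linear combinations of symbols built from products of $\partial^\alpha \Phi$ (with $|\alpha|\ge 1$) divided by powers of $|\nabla \Phi|^2$, acting on derivatives of $\overline{g(\cdot - x)}$; the required gain of one factor $\lambda^{-1}$ per iteration follows from the two uniform estimates $|\nabla \Phi|^{-1} \lesssim \lambda^{-1}$ and $|\partial^\alpha \Phi(y)| \lesssim \lan x \ran^{m-|\alpha|} \lesssim \lambda/A$ for $|\alpha|\ge 2$ (the latter leveraging the hypothesis $\lambda \ge A|x|^{m-2}$).

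The main obstacle is precisely the bookkeeping in this induction: one must verify that the combinatorial explosion of terms arising from iterating $L^{\mathrm{t}}$ (via Leibniz and chain rules applied to the symbol $\partial_j\Phi/|\nabla\Phi|^2$) uniformly exhibits the decay $\lambda^{-N}$ without losing any factor, an accounting made possible by the matching between the polynomial growth $\lan x \ran^{m-|\alpha|}$ of higher derivatives of $\mu$ and the oscillation-induced gain $\lambda^{-1}$ per step. Once the pointwise inductive bound is in hand, integrating against the unit modulus of $e^{2\pi i \Phi}$ over the compact support of $g(\cdot - x)$ yields $|V_g F_\mu(x,\xi)| \lesssim_N \lambda^{-N}$, and the replacement of $\lambda^{-N}$ by $\lan \lambda\ran^{-N}$ is harmless thanks to part (i).
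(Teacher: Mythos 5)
Your proof is correct and follows essentially the same route as the paper: (i) is identical, and for (ii) both proofs rest on the same non-stationary-phase integration-by-parts scheme, with the same key estimates --- $|\nabla\Phi|\gtrsim\lambda$ on the support via the second-derivative bound and the choice of $A$, and $|\partial^\alpha\Phi|\lesssim\lan x\ran^{m-|\alpha|}$ for $|\alpha|\ge 2$, so that each application of $L^{\mathrm t}$ gains a factor $\lambda^{-1}$. The only noteworthy (and immaterial) difference is in how the non-generic region is dispatched: the paper splits on $|x|\le 1$ versus $|x|>1$, treating the bounded-$x$ region by observing that $h_x(y)=e^{2\pi i\mu(y)}\overline{g(y-x)}$ is Schwartz with $x$-uniform seminorms (and then changes variables $y\mapsto y+x$ for $|x|>1$ so that the phase $\vp_{x,\xi}$ lives on $|y|\le 1$), whereas you split on $\lambda=|\xi-\nabla\mu(x)|\lesssim 1$ versus $\lambda\gtrsim 1$ and appeal to part (i) for small $\lambda$, running the IBP argument uniformly on the complementary range without translating. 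Your split is marginally more economical because it subsumes the paper's separate bounded-$x$/Schwartz argument; the "bookkeeping" you flag as the main obstacle is precisely what the paper makes explicit by recording that $\partial^\alpha B_j^{(x,\xi)}$ is a finite linear combination of quotients $\partial^{\gamma_1}\vp\cdots\partial^{\gamma_k}\vp/|\nabla\vp|^{k+1}$, each controlled by $|\xi-\nabla\mu(x)|^{-1}$, and that the coefficients of $(L^\top)^N$ are products of $N$ such derivatives. That accounting closes the induction you outline, so there is no gap in your plan --- it just needs to be spelled out.
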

	
	\begin{proof}
		The pointwise bound in \eqref{eq-VgFm-pointw} is straightforward. 
		
		Concerning \eqref{eq-VgFm-decay}, let us separately discuss the cases where $|x|\le 1$ and $|x|>1$. It is easy to realize that if $|x|\le 1$ then the function 
		\begin{equation}
			h_x(y) \coloneqq e^{2\pi i \mu(y)}\overline{g(y-x)}
		\end{equation} belongs to $\cS(\rd)$, the corresponding seminorms being bounded with respect to $x$. Therefore, if $|x| \le 1$ then 
		\begin{equation}
			|V_g \Fmu (x,\xi)| = |\wh{ h_x}(\xi)| \lesssim_{N} \lan \xi \ran^{-N}, 
		\end{equation} for all $N \ge 0$, and this proves the claim since $\lan \xi - \nabla \mu(x) \ran \asymp \lan \xi \ran$ if $|x| \le 1$. 
		Assume now $|x|>1$, and note that 
		\begin{equation}
			V_g \Fmu (x,\xi) = \ird e^{-2\pi i \vp_{x,\xi}(y)} \overline{g(y)} dy, 
		\end{equation} where we set $\vp(y) = \vp_{x,\xi}(y) \coloneqq (y +x)\cdot \xi - \mu(y+x)$. After noticing that $\nabla \vp(y) = \nabla_y \vp(y) = \xi - \nabla \mu(y+x)$, it is natural to consider the set
		\begin{equation}
			\Omega \coloneqq \{ (x,\xi) \in \rdd : \quad |x|\ge 1, \quad |\xi-\nabla \mu(x)|\ge A |x|^{m-2}\},
		\end{equation} where the constant $A>0$ is chosen below. If $|y|\le 1$ and $|x| \ge 1$ then
		\begin{equation}
			|\nabla \mu(y+x)-\nabla \mu(x)| \le \sup_{|z| \le 1} |\nabla^2 \mu(z+x)| \le C |x|^{m-2},
		\end{equation} for some $C>0$. Choosing $A>2C$ yields, for $(x,\xi) \in \Omega$ and $|y|\le 1$,
		\begin{equation}\label{eq-vp-nabla}
			|\nabla \vp(y)| \ge |\xi - \nabla \mu(x)| -|\nabla \mu(y+x) - \nabla \mu(x)| \ge \frac{1}{2} |\xi- \nabla \mu(x)|\ge \frac{A}{2}|x|^{m-2}.
		\end{equation} Moreover, since $\mu \in A^m(\rd)$ we have 
		\begin{equation}\label{eq-vp-deriv}
			|\partial^\alpha \vp(y)| \lesssim |x|^{m-|\alpha|}, \qquad |\alpha|\ge 2, \quad |x|\ge 1, \quad |y|\le 1. 
		\end{equation} 

		Consider now the differential operator 
		\begin{equation}
			L \coloneqq \sum_{j=1}^d B_j^{(x,\xi)}(y) \, \partial_{y_j}, 
		\end{equation} where
		\begin{equation}
			B_j^{(x,\xi)}(y) \coloneqq \frac{\partial_{y_j} \varphi(y)}{-2\pi i |\nabla \varphi(y)|^2}, \qquad (x,\xi) \in \Omega, \quad |y|\le 1.
		\end{equation} 
    A recursive argument shows that $\partial^\alpha B_j^{(x,\xi)}$, $\alpha \in \bN^d$, is a (finite, complex) linear combination of terms such as
    \begin{align}
        \frac{\partial^{\gamma_1}\vp(y) \cdots \partial^{\gamma_k}\vp(y)}{|\nabla \vp(y)|^{k+1}} = \frac{\partial^{\gamma_1}\vp(y)}{|\nabla \vp(y)|} \frac{\partial^{\gamma_k}\vp(y)}{|\nabla \vp(y)|} \frac{1}{|\nabla \vp(y)|},
    \end{align} for suitable $k=k(j) \in \bN$, $\gamma_1,\ldots,\gamma_k \in \bN^d$ with $|\gamma_1|,\ldots,|\gamma_k| \ge 1$. Therefore, in light of \eqref{eq-vp-nabla} and \eqref{eq-vp-deriv} we infer that 
    \begin{equation}
        \frac{|\partial^{\gamma}\vp(y)|}{|\nabla \vp(y)|} \le \begin{cases}
            1 & (|\gamma|=1), \\ C_\gamma & (|\gamma| \ge 2), 
        \end{cases} \qquad (x,\xi) \in \Omega, \quad |y|\le 1,
    \end{equation} for suitable constants $C_\gamma >0$. Once again by virtue of \eqref{eq-vp-nabla}, we have
    \begin{equation}
        \frac{1}{|\nabla \vp(y)|} \le \frac{2}{|\xi-\nabla \mu(x)|}, \qquad (x,\xi) \in \Omega, \quad |y|\le 1,
    \end{equation} hence we conclude that 
    \begin{equation}\label{eq-derBj-bound}
        |\partial^\alpha B_j^{(x,\xi)}(y)| \lesssim_\alpha \frac{1}{|\xi-\nabla \mu(x)|}, \qquad (x,\xi) \in \Omega, \quad |y|\le 1, \quad \alpha \in \bN^d, \quad j=1,\ldots,d.
    \end{equation}

Note that 
		\begin{equation}
			L^N e^{-2\pi i \vp(y)} = e^{-2\pi i \vp(y)}, \quad N \in \bN. 
		\end{equation}
Integration by parts $N\ge 1$ times then gives
		\begin{equation}\label{eq-vgfm-bound}
			|V_g \Fmu(x,\xi)| \le \int_{|y|\le 1} |(L^\top)^N \bar g(y)| \dd{y}. 
		\end{equation} Arguing by induction on $N$ yields that $(L^\top)^N$ is a linear partial differential operator whose coefficients are (finite, complex) linear combinations of terms of the type
\begin{equation}
    \partial^{\gamma_1} B_{j_1}^{(x,\xi)}(y) \cdots \partial^{\gamma_N} B_{j_N}^{(x,\xi)}(y),
\end{equation} for suitable $j_1,\ldots,j_N \in \{1,\ldots,d\}$ and $\gamma_1,\ldots,\gamma_N \in \bN^d$. We have, as a result of \eqref{eq-derBj-bound}, 
\begin{equation}
   | \partial^{\gamma_1} B_{j_1}^{(x,\xi)}(y) \cdots \partial^{\gamma_N} B_{j_N}^{(x,\xi)}(y)|  \lesssim |\xi-\nabla \mu(x)|^{-N}, \qquad (x,\xi) \in \Omega, \quad |y|\le 1,
\end{equation} hence from \eqref{eq-vgfm-bound} we obtain
\begin{equation}
			|V_g \Fmu(x,\xi)| \lesssim |\xi-\nabla \mu(x)|^{-N}, 
		\end{equation} and the claim for $N\ge 1$ follows after noticing that if $(x,\xi) \in \Omega$ then $|\xi - \nabla \mu(x)|\ge A |x|^{m-2} \gtrsim 1$, hence $|\xi-\nabla \mu(x)| \asymp \lan \xi - \nabla \mu(x) \ran$. The case $N=0$ is trivially encompassed by \eqref{eq-VgFm-pointw}. 
	\end{proof}

In light of the previous findings, our next goal is to provide a characterization of $F_\mu$ in terms of membership to suitable modulation spaces. To this aim, we anticipate a technical result for later use. 
	\begin{proposition}\label{prop-cone}
		Let $\mu \colon \rd\smo \to \bR$ be a positively homogeneous $C^2$ function of degree $m \ge 2$, satisfying 
		\begin{equation}
			\det |\nabla^2 \mu(x)| \ne 0 \quad \text{ if } \quad |x|=1. 
		\end{equation}
		For all $x_0 \in \rd$ with $|x_0|=1$ there exist an open conic neighbourhood $\Gamma \subset \rd\smo$ of $x_0$ and a constant $C=C(x_0,\Gamma)>0$ such that 
		\begin{equation}\label{eq-nabla-cone}
			|\nabla \mu(x)-\nabla \mu(y)| \ge C |x-y|(|x|^{m-2}+|y|^{m-2}), \quad \forall\, x,y \in \Gamma. 
		\end{equation}
	\end{proposition}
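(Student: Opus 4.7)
The plan is to combine the fundamental theorem of calculus with the homogeneity of $\nabla^2\mu$ and the invertibility of the Hessian at $x_0$. Starting from
\begin{equation}
\nabla\mu(x)-\nabla\mu(y)=\int_0^1\nabla^2\mu\bigl((1-t)y+tx\bigr)(x-y)\,dt,
\end{equation}
valid whenever the segment $[y,x]$ misses the origin, I exploit that $\nabla^2\mu$ is positively homogeneous of degree $m-2$, so $\nabla^2\mu(z)=|z|^{m-2}\nabla^2\mu(z/|z|)$ for $z\neq0$, with continuous angular part on $S^{d-1}$. Setting $A_0:=\nabla^2\mu(x_0)$, the hypothesis $\det A_0\neq 0$ yields a constant $c_0>0$ with $|A_0w|\geq c_0|w|$ for every $w\in\rd$. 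By uniform continuity on the sphere, for any prescribed $\delta>0$ there is an open convex conic neighborhood $\Gamma$ of $x_0$ (for instance, $\{z\in\rd\smo : \angle(z,x_0)<\varepsilon\}$ with $\varepsilon<\pi/4$, which is automatically convex) such that $|\nabla^2\mu(\hat z)-A_0|<\delta$ for every unit vector $\hat z\in\Gamma$.

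Plugging the decomposition $\nabla^2\mu(\hat z)=A_0+E(\hat z)$ into the above integral yields
\begin{equation}
\nabla\mu(x)-\nabla\mu(y)=I\cdot A_0(x-y)+R,\qquad I:=\int_0^1\bigl|(1-t)y+tx\bigr|^{m-2}\,dt,
\end{equation}
where $|R|\leq\delta\,|x-y|\,I$. The lower bound $|A_0(x-y)|\geq c_0|x-y|$ together with the triangle inequality then gives $|\nabla\mu(x)-\nabla\mu(y)|\geq(c_0-\delta)\,|x-y|\,I$, so choosing $\delta<c_0/2$ reduces the whole proposition to the geometric estimate $I\gtrsim_m|x|^{m-2}+|y|^{m-2}$ for $x,y\in\Gamma$.

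For this last step, narrowness of $\Gamma$ is crucial once more. Expanding $|(1-t)y+tx|^2$ and using the angle bound $\langle x,y\rangle\geq\cos(2\varepsilon)|x||y|$, a direct computation gives $|(1-t)y+tx|\geq\sqrt{\cos(2\varepsilon)}\bigl((1-t)|y|+t|x|\bigr)$ uniformly in $t\in[0,1]$. Splitting $[0,1]=[0,1/2]\cup[1/2,1]$ and using $(1-t)|y|+t|x|\geq|y|/2$ on the first subinterval and $\geq|x|/2$ on the second, one bounds each piece below by a multiple of $|y|^{m-2}$, respectively $|x|^{m-2}$, producing $I\gtrsim_m|x|^{m-2}+|y|^{m-2}$. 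The main obstacle is picking $\Gamma$ narrow enough to simultaneously (i) dominate the angular remainder $E$ by $c_0/2$ and (ii) secure the comparability $|(1-t)y+tx|\asymp(1-t)|y|+t|x|$; both rest on continuity on $S^{d-1}$, yet care is required because the second comparability must remain uniform in $t$ even when the magnitudes $|x|$ and $|y|$ differ drastically.
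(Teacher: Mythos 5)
Your proposal is correct, and it follows a genuinely different route from the paper's. The paper first invokes the inverse mapping theorem at $x_0$ to obtain $|\nabla\mu(x)-\nabla\mu(y)|\ge c|x-y|$ for $x,y$ in a compact annular piece $r^{-1}<|x|,|y|<r$ of a narrow cone, then exploits that both sides of \eqref{eq-nabla-cone} are positively homogeneous of degree $m-1$ in the pair $(x,y)$ to propagate the estimate to all pairs with comparable magnitudes $r^{-2}<|x|/|y|<r^2$, and finally handles the remaining case $|x|/|y|\ge r^2$ by showing that both sides are then comparable to $|x|^{m-1}$ (using Euler's identity and the non-degenerate Hessian to bound $|\nabla\mu|$ from below on the sphere). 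You instead represent $\nabla\mu(x)-\nabla\mu(y)$ as an integral of the Hessian along the segment $[y,x]$ --- which stays in $\rd\smo$ because your angular cone is convex --- freeze the angular part of the Hessian at $A_0=\nabla^2\mu(x_0)$ with an error made small by continuity on $S^{d-1}$, and reduce the whole proposition to the elementary bound $\int_0^1|(1-t)y+tx|^{m-2}\,dt\gtrsim |x|^{m-2}+|y|^{m-2}$, which your comparability $|(1-t)y+tx|\ge\sqrt{\cos(2\varepsilon)}\,((1-t)|y|+t|x|)$ and the splitting of $[0,1]$ deliver uniformly in $t$ (note that $m\ge 2$ is used exactly where you raise this inequality to the power $m-2\ge 0$). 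Your argument treats all pairs $x,y\in\Gamma$ in one stroke, avoids both the inverse function theorem and the case distinction on the ratio $|x|/|y|$, and produces an explicit constant of the form $(c_0-\delta)\cos(2\varepsilon)^{(m-2)/2}2^{-(m-1)}$; the paper's argument, in exchange, does not need convexity of the cone and displays more transparently the role of homogeneity and the large-ratio regime.
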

	
	\begin{proof}
		As a consequence of the inverse mapping theorem we have that for suitable $c>0$, $r>1$ and $\veps>0$, if $r^{-1} < |x|,|y| < r$ and $x,y \in \Gamma_\veps \coloneqq \Big\{u \in \rd\smo : \Big| \frac{u}{|u|}-x_0 \Big| < \veps \Big\}$, then 
		\begin{equation}\label{ineq-1}
			|\nabla \mu(x)-\nabla \mu(y)| \ge c |x-y|. 
		\end{equation} This implies the inequality in \eqref{eq-nabla-cone} for $x,y\in\Gamma_\veps$, $r^{-1}<|x|,|y|<r$. We claim that, by homogeneity, this implies that the same estimate in \eqref{eq-nabla-cone} holds if $x,y \in \Gamma_\veps$ are such that $r^{-2}<|x|/|y|<r^2$. Indeed, given that both sides of \eqref{eq-nabla-cone} are positively homogeneous of degree $m-1$ with respect to the pair $x,y\not=0$, such inequality holds for a pair $x,y$ if and only if it holds for the pair $\alpha x, \alpha y$ for some $\alpha>0$. Since it has already been proved for $x,y\in \Gamma_\varepsilon$ with $r^{-1}<|x|,|y|<r$, it suffices to prove that, given $x,y\in \Gamma_\varepsilon$ with $r^{-2}<|x|/|y|<r^2$, there exists $\alpha>0$ such that $r^{-1}<\alpha|x|,\alpha|y|<r$, that is, $\alpha\in (r^{-1}/|x|,r/|x|)$ and $\alpha\in (r^{-1}/|y|,r/|y|)$. These two intervals have non-empty intersection --- if $|x|\leq|y|$ we have $r^{-1}/|y|\leq r^{-1}/|x|<r/|y|\leq r/|x|$, and analogous conclusions hold if $|x|\geq|y|$.
  
		It remains to prove the inequality in \eqref{eq-nabla-cone} for $x, y \in \Gamma_{\veps}$ satisfying $|x|/|y| \ge r^2$ or $|y|/|x| \ge r^2$ --- the arguments being identical, we focus here on the first assumption. Therefore, combining this with triangle inequality we infer 
		\begin{equation}
			|x-y|(|x|^{m-2}+|y|^{m-2}) \le \Big( 1+ \frac{1}{r^2} \Big) \Big( 1+\frac{1}{r^{2(m-2)}} \Big)|x|^{m-1}. 
		\end{equation}
		On the other hand, 
		\begin{align}
			|\nabla \mu(x) - \nabla \mu(y)| & = \Big| |x|^{m-1} \nabla \mu\Big(\frac{x}{|x|} \Big) - |y|^{m-1} \nabla \mu\Big(\frac{y}{|y|} \Big) \Big| \\
			& \ge \Big| |x|^{m-1} - |y|^{m-1} \Big| \Big|\nabla \mu\Big(\frac{x}{|x|}\Big)\Big| - |y|^{m-1} \Big|\nabla \mu\Big(\frac{x}{|x|}\Big) - \nabla \mu\Big(\frac{y}{|y|}\Big)\Big| \\
			& \ge |x|^{m-1}\Bigg( \Big( 1- \frac{1}{r^{2(m-1)}} \Big) \Big|\nabla \mu\Big(\frac{x}{|x|}\Big)\Big| - \frac{1}{r^{2(m-1)}} \sup_{\substack{u,v \in \Gamma_\veps \\ |u|=|v|=1}} |\nabla \mu(u)-\nabla \mu(v)|\Bigg).
		\end{align}
		Note that: 
		\begin{itemize} \item The term $\displaystyle \sup |\nabla \mu(u)-\nabla \mu(v)|$ can be made arbitrarily small by tuning the choice of $\veps$ small enough. 
			\item $|\nabla \mu(u)|$ is uniformly bounded away from zero if $|u|=1$. In fact, we have in general
			\begin{equation}\label{eq-grad-lowbd}
				|\nabla \mu(x)| \gtrsim |x|^{m-1}, \qquad |x| \ne 0, 
			\end{equation} as a consequence of Euler's homogeneous function theorem
			\begin{equation}
				\nabla^2 \mu(x) x = (m-1)\nabla \mu(x), \qquad x \in \rd,
			\end{equation} the assumptions on the invertibility of the Hessian of $\mu$ on the unit sphere and the fact that $m\ge 2$. 
		\end{itemize}
  Hence $|\nabla \mu(x)-\nabla \mu(y)| \gtrsim |x|^{m-1}$ for $x, y \in \Gamma_{\veps}$ satisfying $|x|/|y| \ge r^2$, which concludes the proof. 
	\end{proof}
	
	\begin{remark}
		The claim of Proposition \ref{prop-cone} fails to be true if $m<2$, as showed by straightforward computations in the case where $\mu(x)=|x|^{m}$ with $m<2$ and $d=1$.  Indeed, if $m<2$, the estimate 
  \[
  |mx^{m-1}-m|\geq C|x-1|(x^{m-2}+1),\qquad x>0
  \]
  clearly cannot hold  for some $C>0$, as one sees by letting $x\to+\infty$. Also, the localization in cones is necessary: if  $\mu(x)=x^3/3$, $x\in\mathbb{R}$, we have $|\mu'(x)-\mu'(y)|=|x^2-y^2|$, which vanishes (in particular) for $x=-y$, and therefore is not bounded below by $C|x-y|(|x|+|y|)$ for every $x,y\not=0$. 
  
	\end{remark}

We are now ready to locate the modulation space membership of higher-order Fresnel functions and prove Theorem \ref{thm-main}.
 
\begin{proof}[Proof of Theorem \ref{thm-main}]
		We show that the function $F_\mu(x)=e^{2\pi i\mu(x)}$ belongs to $ M^{1,\infty}(\rd)$, which is equivalent to the claim in view of Proposition \ref{prop-mod-was}.
		
		Let $\chi \colon \rd \to \bR$ be a smooth bump function near the origin, namely satisfying $0 \le \chi(x) \le 1$ for all $x \in \rd$, $\chi(x)=0$ if $|x|\ge 2$ and $\chi(x)=1$ if $|x|<1$. Therefore, we have
		\begin{equation}
			\Fmu(x)= \Fmu^{(1)}(x) + \Fmu^{(2)}(x), \quad \Fmu^{(1)}(x) \coloneqq \chi(x)e^{2\pi i \mu(x)}, \quad \Fmu^{(2)}(x) \coloneqq (1-\chi(x))e^{2\pi i \mu(x)}. 
		\end{equation}
		
		Let us prove that $\Fmu^{(1)} \in M^{1,\infty}(\rd)$ first. To this aim, it is enough to compute $V_g \Fmu^{(1)}$ with a smooth window function $g$ with compact support, for instance in the unit ball, so that $V_g \Fmu^{(1)}(x,\xi)= 0$ if $|x|\ge 3$. On the other hand, we obtain the pointwise bound $|V_g \Fmu^{(1)}(x,\xi)| \le \|g\|_{L^1}$, implying the claim. 
		
		We focus now on $\Fmu^{(2)}$. Let $\tilde\mu \colon \bR \to \bR$ be any smooth function such that $\tilde\mu(x)=\mu(x)$ if $|x|\ge 1$. The companion Fresnel function is $\wt \Fmu(x) = e^{2\pi i \tilde \mu(x)}$, and since $1-\chi \in M^{\infty,1}(\rd)$ is a bounded pointwise multiplier on any modulation space (cf.\ Proposition \ref{prop-mod-was}), it suffices to show that $\wt \Fmu \in M^{1,\infty}(\rd)$, namely that 
		\begin{equation}
			\sup_{\xi \in \rd} \ird |V_g \wt \Fmu (x,\xi)| \dd{x} < \infty. 
		\end{equation}
		
		It is clear that $\tilde\mu \in A^m(\rd)$, hence by Proposition \ref{prop-Fm} we have for every suitable window $g$ and $N>0$:
		\begin{equation} \label{eq-bound-wtfm}
			|V_g \wt \Fmu(x,\xi)| \lesssim_N \lan \xi - \nabla \tilde \mu(x) \ran^{-N} \quad \text{ if } \quad |\xi -\nabla \tilde \mu(x)|\ge A|x|^{m-2},
		\end{equation} for a suitable $A>0$. Given $\xi \in \rd$, consider the sets
		\begin{equation}
			\Omega_\xi^{(1)} \coloneqq \{ x \in \rd : |\xi - \nabla \tilde \mu(x)| > A|x|^{m-2}\}, 
		\end{equation}
		\begin{equation}
			\Omega_\xi^{(2)} \coloneqq \{ x \in \rd : |\xi - \nabla \tilde \mu(x)| \le A|x|^{m-2}\}.  
		\end{equation}
		
		We first prove that 
		\begin{equation}
			\sup_{\xi \in \rd} \int_{\Omega_\xi^{(2)}} |V_g \wt \Fmu (x,\xi)| \dd{x} < \infty. 
		\end{equation}
		In fact, this claim follows at once by combining \eqref{eq-VgFm-pointw} for $\wt\Fmu$ with uniformity of the measures of $\Omega_\xi^{(2)}$ over $\xi \in \rd$, that is \begin{equation}
			\sup_{\xi \in \rd} |\Omega_\xi^{(2)}| < \infty.
		\end{equation}
		The proof of the latter fact goes as follows. Let $\{\Gamma_j\}_{j=1, \ldots,n}$ be a finite covering of $\rd\smo$ consisting of open conic neighbourhoods obtained as in Proposition \ref{prop-cone}. It is then enough to show that 
		\begin{equation}
			\sup\{ \diam(\Omega_\xi^{(2)} \cap \Gamma_j) : \xi \in \rd, \, j=1,\ldots,n\} < \infty,
		\end{equation} which is in turn equivalent to
		\begin{equation}
			\sup\big\{ \diam( \{x \in \Omega_\xi^{(2)} \cap \Gamma_j, \, |x|\ge 1\}) : \xi \in \rd, \, j=1,\ldots,n\big\} < \infty.
		\end{equation}
		Note that if $x, \xi \in \Omega_\xi^{(2)}\cap \Gamma_j$ satisfy $|x|,|y|\ge 1$ then 
		\begin{equation}
			|\xi - \nabla \tilde \mu(x)| = |\xi - \nabla \mu(x)| \le A |x|^{m-2}, \qquad  |\xi - \nabla \tilde \mu(y)| = |\xi - \nabla \mu(y)| \le A |y|^{m-2}.
		\end{equation} As a result, we have
		\begin{equation}
			|\nabla \mu(x)-\nabla \mu(y)| \le A (|x|^{m-2}+|y|^{m-2}), 
		\end{equation} but by virtue of Proposition \ref{prop-cone} we also have, for a suitable $B>0$, 
		\begin{equation}
			|\nabla \mu(x)-\nabla \mu(y)| \ge B |x-y|(|x|^{m-2}+|y|^{m-2}), 
		\end{equation} hence resulting in the bound $|x-y| \le A/B$, which gives the claim. 
		
		It now remains only to prove that
		\begin{equation}
			\sup_{\xi \in \rd} \int_{\Omega_\xi^{(1)}} |V_g \wt \Fmu (x,\xi)| \dd{x} < \infty. 
		\end{equation} 
It follows from  \eqref{eq-bound-wtfm} and the definition of $\Omega_\xi^{(1)}$ that
		\begin{equation}\label{eq-bound-vgftm}
			|V_g \wt \Fmu (x,\xi)| \lesssim_N \lan x \ran^{-(m-2)N}, \qquad x \in \Omega_\xi^{(1)},
		\end{equation} which suffices to prove the claim in the case where $m>2$ provided that $N$ is chosen large enough. On the other hand, if $m=2$ then \eqref{eq-VgFm-pointw} yields uniform boundedness of $V_g \wt \Fmu$ on $\Omega_\xi^{(1)}$, which implies that 
		\begin{equation}
			\sup_{\xi \in \rd} \int_{E^{(\le 1)}_{\xi}} |V_g \wt \Fmu (x,\xi)| \dd{x} < \infty, \qquad E^{(\le 1)}_{\xi} \coloneqq \{ x \in \Omega_\xi^{(1)} : |x| \le 1 \}. 
		\end{equation}
		Therefore, it remains to obtain the analogous bound in the case where $E^{(\le 1)}_{\xi}$ is replaced by $E^{(>1)}_{\xi} \coloneqq \{x \in \Omega_\xi^{(1)} : |x|>1\}$. Let $\{\Gamma_j\}_{j=1, \ldots,n}$ be a finite covering of $\rd\smo$ consisting of open conic neighbourhoods obtained as in Proposition \ref{prop-cone}. By virtue of additivity, it is thus enough to show that 
		\begin{equation}
			\sup_{\xi \in \rd} \int_{E^{(>1)}_{\xi} \cap \Gamma_j} |V_g \wt \Fmu (x,\xi)| \dd{x} < \infty, \qquad \forall j=1, \ldots, n. 
		\end{equation}
		Since $\tilde \mu (x) = \mu(x)$ if $|x|>1$, we have $E^{(>1)}_\xi \cap \Gamma_j = \{ x \in \Gamma_j : |x|>1 \text{ and } |\xi - \nabla \mu(x)| \ge A \}$, which motivates the change of variables $v = \nabla \mu(x)$. The gradient map $\nabla \mu \colon \rd \smo \to \rd \smo$ is smooth, positively homogeneous of degree $1$ and by Proposition \ref{prop-cone} we have that $\Gamma_j$ is bijectively mapped to $\Lambda_j \coloneqq \nabla \mu (\Gamma_j)$. Therefore, since $| \det \nabla^2\mu(x)| \ge \delta >0$ for $|x|=1$ by assumption, after resorting to \eqref{eq-bound-wtfm} we obtain
		\begin{align}
			\int_{E^{(>1)}_{\xi} \cap \Gamma_j} |V_g \wt \Fmu (x,\xi)| \dd{x} & \lesssim_N \int_{E^{(>1)}_{\xi} \cap \Gamma_j} \lan \xi - \nabla \mu(x) \ran^{-N} \dd{x} \\
            & \lesssim \int_{\Gamma_j} \lan \xi - \nabla \mu(x) \ran^{-N} \dd{x}\\
			& \lesssim \int_{\Lambda_j} \lan \xi - v \ran^{-N} \dd{v} \\
            & \le \ird \lan v \ran^{-N} \dd{v} < \infty, 
		\end{align} as long as $N>d$. 
	\end{proof}
	
	\begin{remark} Let us make some comments on Theorem \ref{thm-main}. 
		\begin{itemize}\setlength\itemsep{1ex}
			\item Given a symmetric matrix $Q \in \bR^{d\times d}$, consider $\mu(x)=\frac{1}{2}Qx \cdot x$. After diagonalization, it is easy to realize that the assumption of invertibility of $\nabla^2 \mu(x)= Q$ is actually necessary in order for $F_\mu = e^{2\pi i \mu}$ to belong to $M^{1,\infty}(\rd)$. It is indeed well known that a linear and invertible change of variable yields a bounded operator in $M^{1,\infty}(\rd)$ (see for instance \cite[Proposition 2.3]{CR_book}). As a consequence, we can verify the claim assuming $\mu(x)=\frac{1}{2}\sum_{j=1}^k\veps_j x_j^2$, with $\veps_j\in \{-1,1\}$, $0\leq k\leq d$ (with $Q(x)\equiv 0$ if $k=0$). If $k<d$ then the modulus short-time Fourier transform $|V_g F_\mu(x_1,\ldots,x_d,\xi_1,\ldots,\xi_d)|$, with $g=g_1\otimes\ldots\otimes g_d$, is in fact independent of $x_d$, and therefore $\int_{\bR}|V_g F_\mu(x_1,\ldots,x_d,\xi_1,\ldots,\xi_d)|\,\dd{x_d}=\infty$ for every $x_1,\ldots,x_{d-1},\xi_1,\ldots, \xi_d$. As a consequence, $F_\mu\not\in M^{1,\infty}(\rd)$.
			
			\item It is well known \cite{benyi_unimod} that $e^{2\pi i |x|^m} \in W^{1,\infty}(\rd)$ if $m \le 2$, and this condition is in fact optimal. Indeed, if  $e^{2\pi i |x|^m} \in W^{1,\infty}(\rd)$ then by Proposition \ref{prop-mod-was} the pointwise multiplication by $e^{2\pi i |x|^m}$ is a bounded operator on $W^{p,q}(\bR^d)$ for every $1\leq p,q\leq\infty$ --- that is, the Fourier multiplier $e^{2\pi i |D|^m}$ is bounded on $M^{p,q}(\bR^d)$ for every $1\leq p,q\leq\infty$, which in turn happens only if $m\leq 2$ in light of \cite[Theorem 1.2] {miyachi}. 
   
            This result can now be complemented with Theorem \ref{thm-main}, showing that $e^{2\pi i |x|^m} \in M^{1,\infty}(\rd)$ if $m\ge 2$. In particular, at the threshold $m=2$ we have that the Fresnel function $e^{2\pi i |x|^2}$ belongs to $W^{1,\infty}(\rd) \cap M^{1,\infty}(\rd)$ --- this is of course known and can be easily verified by an explicit computation with a Gaussian window.
			
			\item Heuristic considerations lead us to believe that the assumption $m\ge 2$ is sharp. Indeed, the principle of duality of phases \cite[Sec.\ VIII.5]{stein} shows that, if $d=1$, the Fourier transform of $e^{2\pi i |x|^m}$ with $m>1$ roughly behaves like $e^{i c |\xi|^{m'}}$ for some $c \in \bR\smo$, where $m'$ satisfies $\frac{1}{m} + \frac{1}{m'} = 1$  --- see also \cite{hochberg} for an alternative derivation of this result. As argued above, the latter function notably fails to belong to $W^{1,\infty}(\bR) = \cF M^{1,\infty}(\bR)$ when $m'>2$. 
		\end{itemize}
	\end{remark}
	
\subsection{Some dispersive estimates for {$\mu(D)$}}
Let us highlight an interesting consequence of Theorem \ref{thm-main}, namely dispersive estimates for higher-order Schr\"odinger-type equations. 

\begin{corollary}\label{cor-disp-est}
	Let $\mu$ be as in Theorem \ref{thm-main}. Consider the Cauchy problem 
	\begin{equation}
		\begin{cases}
			\frac{1}{2\pi i} \partial_t u = \mu(D) u \\
			u(0,x)=f(x),
		\end{cases} \qquad (t,x) \in \bR_+ \times \rd.
	\end{equation}
	The corresponding propagator $U_\mu(t) \coloneqq e^{2\pi i t \mu(D)}$ is bounded $W^{\infty,1}(\rd) \to W^{1,\infty}(\rd)$, with 
	\begin{equation} \label{eq-Umut}
		\| U_\mu (t) f \|_{W^{1,\infty}} \le C \max\{t^{-d/m},t^{-2d/m}\} \|f\|_{W^{\infty,1}},
	\end{equation} for a constant $C>0$ that does not depend on $t$ and $f$. 
\end{corollary}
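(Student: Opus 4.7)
\textbf{Proof proposal for Corollary \ref{cor-disp-est}.} The plan is to reduce the dispersive estimate \eqref{eq-Umut} to a combination of (i) the representation of $U_\mu(t)$ as a convolution with the rescaled fundamental solution $E(t,\cdot)$, (ii) a Wiener amalgam convolution inequality of the form $W^{1,\infty}\ast W^{\infty,1}\hookrightarrow W^{1,\infty}$, and (iii) a sharp scaling analysis of $\|E(t,\cdot)\|_{W^{1,\infty}}$ built from Theorem \ref{thm-main} and the dilation estimate of Proposition \ref{prop-mod-was}(4).

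First, I would recall that $U_\mu(t)f=E(t,\cdot)\ast f$, where the fundamental solution scales as $E(t,x)=t^{-d/m}E_\mu(t^{-1/m}x)$ (this follows from the homogeneity of $\mu$ by the change of variable $\xi\mapsto t^{-1/m}\eta$ in the Fourier representation $E(t,x)=\int e^{2\pi i t\mu(\xi)}e^{2\pi i x\cdot \xi}\,\dd{\xi}$). Next, I would transfer the convolution $E(t,\cdot)\ast f$ to a pointwise product under Fourier transform: since $W^{p,q}(\rd)=\cF M^{p,q}(\rd)$ by Proposition \ref{prop-mod-was}(1), the desired bound
\begin{equation}
    \|E(t,\cdot)\ast f\|_{W^{1,\infty}} \lesssim \|E(t,\cdot)\|_{W^{1,\infty}}\|f\|_{W^{\infty,1}}
\end{equation}
is equivalent to the pointwise multiplication inequality $M^{1,\infty}(\rd)\cdot M^{\infty,1}(\rd)\hookrightarrow M^{1,\infty}(\rd)$, which is an instance of Proposition \ref{prop-mod-was}(2) with the admissible exponents $(p_1,q_1)=(1,\infty)$, $(p_2,q_2)=(\infty,1)$, $(p,q)=(1,\infty)$ (both conditions $1/p_1+1/p_2\ge 1/p$ and $1/q_1+1/q_2\ge 1+1/q$ are saturated).

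The core computation is then the scaling of $\|E(t,\cdot)\|_{W^{1,\infty}}$. By Theorem \ref{thm-main}, $E_\mu\in W^{1,\infty}(\rd)$, so I would apply the dilation estimate of Proposition \ref{prop-mod-was}(4) in its Fourier-conjugate form on $W^{1,\infty}$: writing $E(t,\cdot)=t^{-d/m}(E_\mu)_{t^{-1/m}}$ with $(E_\mu)_\lambda(x)=E_\mu(\lambda x)$, and carrying the dilation through $\cF$ so as to invoke part (4) with $(p,q)=(1,\infty)$ (which produces the exponent $d(1/p-1/q+1)=2d$), yields
\begin{equation}
    \|E(t,\cdot)\|_{W^{1,\infty}} \lesssim t^{-2d/m}(1+t^{2/m})^{d/2}\|E_\mu\|_{W^{1,\infty}}.
\end{equation}
Splitting $t\le 1$ and $t\ge 1$ gives respectively $t^{-2d/m}$ and $t^{-d/m}$, i.e.\ the $\max$ in \eqref{eq-Umut}. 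Combining with the convolution estimate above completes the proof.

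The only step that requires some care is the bookkeeping of the dilation factors: Proposition \ref{prop-mod-was}(4) is stated for modulation spaces, while we need the analogue for Wiener amalgam spaces, so I would pass through the identity $\widehat{f_\lambda}(\xi)=|\lambda|^{-d}\widehat{f}(\xi/\lambda)$ and apply the modulation-space dilation with parameter $1/\lambda$. All other steps are essentially mechanical applications of tools already recorded in Section \ref{sec-gabor} and of Theorem \ref{thm-main}.
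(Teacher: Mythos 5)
Your proposal is correct and is essentially the paper's own argument in Fourier-conjugate form: the paper bounds $\|F_\mu(t^{1/m}\cdot)\,\wh f\|_{M^{1,\infty}}$ directly via the product estimate $M^{1,\infty}\cdot M^{\infty,1}\to M^{1,\infty}$ and the dilation estimate of Proposition \ref{prop-mod-was}(4), which is exactly what your convolution formulation reduces to after taking Fourier transforms. Your bookkeeping of the dilation exponents (including the correct $t^{-d/m}$ prefactor in the scaling of $E(t,\cdot)$) checks out and reproduces the stated $\max\{t^{-d/m},t^{-2d/m}\}$.
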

\begin{proof}
	Set $F_\mu(x)=e^{2\pi i\mu(x)}$. First, notice that $U_\mu(t)$ coincides with the Fourier multiplier $\Fmu(t^{1/m}D)$ with symbol $\Fmu(t^{1/m}x) = e^{2\pi i \mu(t^{1/m}x)}$. Consider now the case where $t=1$. Given $f \in W^{\infty,1}(\rd)$, in light of Proposition \ref{prop-mod-was} and Theorem \ref{thm-main} we have
	\begin{equation}
		\| U_\mu(1) f \|_{W^{1,\infty}} = \| \Fmu \wh f \|_{M^{1,\infty}} \lesssim \| \Fmu \|_{M^{1,\infty}} \| f\|_{W^{\infty,1}}. 
	\end{equation}
The general case reduces to the former by resorting to dilation properties of the symbol $\Fmu$ on modulation spaces (see Proposition \ref{prop-mod-was}): 
\begin{equation}
	\| \Fmu(t^{1/m} \cdot) \|_{M^{1,\infty}} \lesssim t^{-2d/m}(1+t^{2/m})^{d/2} \|\Fmu \|_{M^{1,\infty}} \lesssim \begin{cases}
		t^{-d/m} \|\Fmu \|_{M^{1,\infty}} & (t\ge 1) \\ 
		t^{-2d/m} \|\Fmu \|_{M^{1,\infty}} & (0<t<1), 
	\end{cases}
\end{equation} and the claim follows. 
\end{proof}

\begin{remark}
	We emphasize that the initial datum $f$ belongs to $W^{\infty,1}(\rd)$, encompassing the case of complex finite measures --- cf.\ \cite[Proposition 3.4]{NT_cmp}. Moreover, in the Schr\"odinger setting ($m=2$) we retrieve the asymptotics provided by well-known dispersive estimates on amalgam spaces, cf.\ \cite{CN_str}. In fact, arguing as in \cite{CN_str}, one can obtain Strichartz estimates in this general context. 
\end{remark}

\section{Proof of Theorem \ref{thm-pde}}
Let us now consider the problem of local wellposedness in presence of potential perturbations and nonlinearities. Our strategy is centered on a standard abstract iteration argument, which is stated below for the convenience of the reader. 

\begin{proposition}[{\cite[Proposition 1.38]{tao book}}]\label{prop-abs-nonlin}
	Let $X$ and $Y$ be two Banach spaces, and let $A \colon X\to Y$ be a bounded linear operator such that 
	\begin{equation}\label{eq-abs-cond1}
		\| Au \|_Y \le C_0 \|u\|_X, 
	\end{equation} 
	for all $u \in X$ and some $C_0 >0$. Let $G \colon Y\to X$ be a nonlinear operator such that $G(0)=0$ and 
	\begin{equation} \label{eq-abs-cond2}
		\| G(u)-G(v) \|_X \le \frac{1}{2C_0} \| u-v \|_Y, 
	\end{equation} 
	for all $u,v$ in the ball $B_{R}(Y) = \{u \in Y : \| u \|_Y \le R \}$ for some $R >0$. 
	
	Then, for every $u_0 \in B_{R/2}(Y)$ there exists a unique solution $u \in B_{R}(Y)$ to the equation 
	\[ u=u_0 + AG(u), \]
	and the correspondence $u_0 \mapsto u$ is Lipschitz with constant at most $2$, that is $\| u \|_Y \le 2 \| u_0 \|_Y$. 
\end{proposition}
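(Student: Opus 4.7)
The natural approach is a direct application of Banach's fixed-point theorem to the map $\Phi \colon Y \to Y$ defined by $\Phi(u) \coloneqq u_0 + AG(u)$: a fixed point of $\Phi$ is precisely a solution of $u = u_0 + AG(u)$. I would establish separately that $\Phi$ preserves the closed ball $B_R(Y)$ and that it is a strict $\tfrac12$-contraction there.

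For invariance, given $u \in B_R(Y)$, the hypothesis $G(0) = 0$ combined with \eqref{eq-abs-cond2} evaluated at $v=0$ gives
\begin{equation*}
\|G(u)\|_X = \|G(u) - G(0)\|_X \leq \frac{1}{2C_0}\|u\|_Y \leq \frac{R}{2C_0},
\end{equation*}
so that \eqref{eq-abs-cond1} and the standing hypothesis $\|u_0\|_Y \leq R/2$ yield
\begin{equation*}
\|\Phi(u)\|_Y \leq \|u_0\|_Y + C_0 \|G(u)\|_X \leq \tfrac{R}{2} + \tfrac{R}{2} = R.
\end{equation*}
For contractivity, the same two hypotheses give, for $u,v \in B_R(Y)$,
\begin{equation*}
\|\Phi(u) - \Phi(v)\|_Y = \|A(G(u) - G(v))\|_Y \leq C_0 \cdot \tfrac{1}{2C_0}\|u - v\|_Y = \tfrac12 \|u - v\|_Y.
\end{equation*}
Banach's principle then delivers the unique fixed point $u \in B_R(Y)$.

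The Lipschitz dependence on $u_0$ is obtained by subtracting the fixed-point equations for two initial data $u_0, \tilde u_0 \in B_{R/2}(Y)$ with corresponding solutions $u, \tilde u \in B_R(Y)$ and reapplying the two key bounds:
\begin{equation*}
\|u - \tilde u\|_Y \leq \|u_0 - \tilde u_0\|_Y + \tfrac12 \|u - \tilde u\|_Y,
\end{equation*}
which rearranges to $\|u - \tilde u\|_Y \leq 2\|u_0 - \tilde u_0\|_Y$. Specialising to $\tilde u_0 = 0$ (for which $\tilde u = 0$ is automatically the solution, since $0 = 0 + A G(0)$) yields the stated bound $\|u\|_Y \leq 2 \|u_0\|_Y$. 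I do not anticipate any genuine obstacle here: this is a textbook Banach fixed-point argument, and the numerical constants in \eqref{eq-abs-cond2} and in the hypothesis $\|u_0\|_Y \leq R/2$ are calibrated precisely so that the contraction ratio is $\tfrac12$, which in turn produces the Lipschitz constant $(1 - \tfrac12)^{-1} = 2$ via the usual geometric-series inversion. The only item worth attention in carrying this out rigorously is to verify that $\Phi$ indeed maps into $Y$ (not merely into some larger space), which is already guaranteed by the assumption that $A \colon X \to Y$ and $G \colon Y \to X$.
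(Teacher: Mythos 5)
Your proof is correct and is precisely the standard contraction-mapping argument that the paper implicitly relies on by citing \cite[Proposition 1.38]{tao book} without reproving it. The invariance and contractivity estimates, the Banach fixed-point step, and the derivation of the Lipschitz bound (including specialising $\tilde u_0=0$ to get $\|u\|_Y\le 2\|u_0\|_Y$) all check out, with the constants matching exactly.
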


We are now ready to prove our main wellposedness result for higher-order dispersive equations with potentials in $W^{\infty,1}(\rd)$ and analytic nonlinearities.  

\begin{proof}[Proof of Theorem \ref{thm-pde}]
	After setting $U_\mu(t) \coloneqq e^{2\pi i t \mu(D)}$, let us start by rewriting the problem in integral form via Duhamel's formula:
	\begin{equation}
		u(t,x) = U_\mu(t) f(x) + 2\pi i \int_0^t U_\mu (t-s) \big[V(s,\cdot) G(u(s,\cdot))\big](x) \dd{s}.
	\end{equation}
	Our goal is now to ensure that the assumptions of Proposition \ref{prop-abs-nonlin} are satisfied with $X=Y=C^0([0,T];W^{1,\infty}(\rd))$, with $0<T \le 1$ to be determined later, $u_0(t)= U_\mu(t)f$ and $A$ being the Duhamel operator $A v(t) \coloneqq 2\pi i \int_0^t U_\mu(t-s) V(s,\cdot) v(s,\cdot) \dd s$. 
	
	Let us first note that, in view of the embedding $\cF L^1(\rd)\hookrightarrow W^{1,\infty}(\rd)$, we have
	\begin{equation}
		\|u_0(t)\|_{W^{1,\infty}} = \|U_\mu(t) f \|_{W^{1,\infty}} \lesssim \|U_\mu(t)f\|_{\cF L^1} = \|f\|_{\cF L^1}. 
	\end{equation}
 Moreover, the map $[0,+\infty) \ni t \mapsto u_0(t) \in W^{1,\infty}(\rd)$ is continuous. Indeed, for $t,t_0 \in [0,+\infty)$, by dominated convergence we infer
 \begin{equation}
     \|U_\mu(t)f - U_\mu(t_0)f \|_{W^{1,\infty}} \lesssim \| (F_\mu(t^{1/m}\cdot)- F_\mu(t_0^{1/m}\cdot)) \wh f\|_{L^1} \to 0 \qquad \text{ as } \quad t \to t_0. 
 \end{equation} As a result, $u_0 \in X$ and $\|u_0\|_X \le C \|f\|_{\cF L^1}$ for a constant $C>0$ that does not depend on $T$. 
	
	Concerning \eqref{eq-abs-cond2}, the Banach algebra property of $X$ under pointwise multiplication (Proposition \ref{prop-mod-was}) implies (cf.\ for instance \cite{CN_fou}) that there exists $C_R>0$ such that
	\begin{equation}
		\|G(u)-G(v)\|_{X} \le C_R \|u-v\|_{X} \quad \text{ if } \quad \max\{\|u\|_{X}, \|v\|_{X}\}\le R.
	\end{equation}
	
	When it comes to prove \eqref{eq-abs-cond1}, by virtue of Proposition \ref{prop-mod-was} and \eqref{eq-Umut} in the case where $t \in [0,T]$, with $0<T\leq 1$, we have
	\begin{align}
		\|Au(t,\cdot)\|_{W^{1,\infty}} & \lesssim  \int_0^t \| U_\mu (t-s) V(s,\cdot)u(s,\cdot)\|_{W^{1,\infty}} \dd{s}  \\ 
		& \lesssim \int_0^t |t-s|^{-2d/m} \| V(s,\cdot)u(s,\cdot)\|_{W^{\infty,1}} \dd{s} \\
		& \lesssim \Big(\int_0^t |t-s|^{-2d/m} \dd{s} \Big) \|V\|_{L^\infty([0,1];W^{\infty,1}(\rd))} \|u\|_X \\
		& \le C' \frac{m}{m-2d} T^{\frac{m-2d}{m}} \|V\|_{L^\infty([0,1];W^{\infty,1}(\rd))} \|u\|_X,
	\end{align} for a constant $C'>0$ depending on $\mu$, where we used the assumption $m>2d$. It is now enough to choose \begin{equation}
		T = \min \Big\{\Big(\frac{m-2d}{2mC'C_R\|V\|_{L^\infty([0,1];W^{\infty,1}(\rd))}}\Big)^{\frac{m}{m-2d}}, 1\Big\}
	\end{equation} to obtain
	\begin{equation}
		\|Au(t,\cdot)\|_{W^{1,\infty}} \le \frac{1}{2C_R} \|u\|_X,\qquad 0\leq t\leq T,
	\end{equation} so that \eqref{eq-abs-cond1} is proved. 

In the case where $G(u)=u$ and $V\in L^\infty([0,+\infty);W^{\infty,1}(\rd))$, one can see that $C_R=1$ and the choice of $T$ does not depend on the initial datum $f$ anymore. Therefore, the mapping $A$ is a contraction and the procedure can be iterated on intervals of the form $[nT,(n+1)T]$, $n \in \bN_+$, leading to a global solution.
\end{proof}

\section{Proof of Theorem \ref{thm-pde-lowreg}}

\begin{proof}[Proof of Theorem \ref{thm-pde-lowreg}]
	Let $F_t^{(\mu)}$ be the symbol of the free propagator $U_\mu(t)=e^{2\pi i t \mu(D)}$, namely $F_t^{(\mu)}(\xi) = e^{2\pi i t \mu(\xi)}$. Note that Proposition \ref{prop-Fm} implies that $F_t^{(\mu)} \in M^\infty(\rd) $, uniformly with respect to $t$ --- that is, there exists $C >0$ such that  $\|F_t^{(\mu)}\|_{M^\infty} \le C$ for every $t \in \bR$. Combining this bound with \eqref{eq-Umut} via complex interpolation 
 (see Proposition \ref{prop-mod-was}), for all $1 \le p \le \infty$ we obtain
	\begin{equation}
		\|F_t^{(\mu)}\|_{M^{p,\infty}} \lesssim \max \{ t^{-\frac{d}{mp}}, t^{-\frac{2d}{mp}} \}.
	\end{equation} We point out that this estimate is used below with $p'$ in place of $p$. 
 
 We can run again the machinery of Proposition \ref{prop-abs-nonlin} as in the proof of Theorem \ref{thm-pde}, this time with $X=Y=C^0([0,T];W^{q,\infty}(\rd))$, $u_0(t) = U_\mu(t)f$ and $T>0$ to be determined later. In particular, we have now 
		\begin{equation}
		\|u_0(t)\|_{W^{q,\infty}} = \|U_\mu(t) f \|_{W^{q,\infty}} \lesssim \|U_\mu(t)f\|_{\cF L^q} = \|f\|_{\cF L^q},
	\end{equation} and the correspondence $[0,+\infty) \ni t \mapsto u_0(t) \in W^{q,\infty}(\rd)$ is continuous, so that $u_0 \in X$ with $\|u_0\|_X\leq C\|f\|_{\cF L^q}$ for a constant $C>0$ that does not depend on $T$.
 
	Using the convolution property $W^{r,1}(\rd) * W^{p',\infty}(\rd) \hookrightarrow W^{q,\infty}(\rd)$ with $1/p+1/q=1+1/r$ (see Proposition \ref{prop-mod-was}), for $0<t<1$ we infer
	\begin{align}
		\|U_\mu(t) f\|_{W^{q,\infty}} & = \| \big(\cF^{-1}F_t^{(\mu)}\big) * f\|_{W^{q,\infty}} \\
        & \lesssim \|F_t^{(\mu)} \|_{M^{p',\infty}} \|f \|_{W^{r,1}} \\
        & \lesssim t^{-\frac{2d}{mp'}} \|f\|_{W^{r,1}}. 
	\end{align}
	Moreover, the relationships $1/p+1/q=1+1/r$ and $1 \le q \le p'$ imply that $1 \le r \le \infty$, and by Proposition \ref{prop-mod-was} we infer that there exists $C>0$ such that, for every $t\geq0$ and $v\in W^{r,1}(\bR^d)$ 
	\begin{equation}
		\| V(t,\cdot) v \|_{W^{r,1}} \leq C \|V(t,\cdot)\|_{W^{p,1}} \|v\|_{W^{q,\infty}}.
	\end{equation}
	Combining these bounds, for $t \in [0,T]$ we obtain  
		\begin{align}
		\|Au(t,\cdot)\|_{W^{q,\infty}} & \lesssim  \int_0^t \| U_\mu (t-s) V(s,\cdot) u(s,\cdot)\|_{W^{q,\infty}} \dd{s}  \\ 
		& \lesssim \int_0^t |t-s|^{-\frac{2d}{mp'}} \| V(s,\cdot)u(s,\cdot)\|_{W^{r,1}} \dd{s} \\
		& \lesssim \Big(\int_0^t |t-s|^{-\frac{2d}{mp'}} \dd{s} \Big) \|V\|_{L^\infty([0,+\infty);W^{p,1})} \|u\|_X. 
	\end{align} The claim then follows as in the proof of Theorem \ref{thm-pde} after choosing $T$ conveniently small and iterating this argument. 
\end{proof}
	
\begin{remark}
	The condition $q \le p'$ is quite natural, since it does ensure that the product $Vu$ in the Cauchy problem is well-defined in the sense of distributions, since $V \in (\cF L^p)_{\text{loc}}(\rd)$ and $u \in (\cF L^{p'})_{\text{loc}}(\rd)$ imply $Vu \in (\cF L^1)_{\text{loc}}(\rd)$. 
\end{remark}

\begin{example} \label{ex-pot} It is worth mentioning some concrete examples of distributions that could serve as (time-independent) potentials in Theorem \ref{thm-pde-lowreg}.
	\begin{itemize} 
		\item Consider compactly supported Coulomb-type singularities as in $V(x)=|x|^{-\alpha} \textbf{1}_{B_R}(x)$, with $0<\alpha<d$ and $R>0$. In view of Proposition \ref{prop-mod-was}, $V$ belongs to $W^{p,1} (\rd)$ if and only if $V \in \cF L^p(\rd)$, namely if and only if $1/p' > \alpha/d$. In particular, the previous result holds provided that 
        \begin{equation}
            \frac{\alpha}{d} < \min \Big\{ \frac{1}{q}, \frac{m}{2d} \Big\}, \qquad m \ge 2,
        \end{equation}
  --- indeed, in that case there exists $1 \le p \le \infty$ such that $1/p' > \alpha/d$, $m >2d/p'$ and $q < p'$ (note that $\alpha/d<1$).
		
		\item Let $V$ be the surface measure of the sphere $S^{d-1} \subset \rd$. It is well known \cite{stein} that for $\xi \in \rd$ with $|\xi|$ sufficiently large the following asymptotic formula holds: 
        \begin{equation}
            \wh{V}(\xi) = 2|\xi|^{-\frac{d-1}{2}} \cos\Big(2\pi \Big(|\xi|-\frac{d-1}{8}\Big)\Big) + O(|\xi|^{-{\frac{d+1}{2}}}).
        \end{equation}
  Therefore $V \in \cF L^p (\rd)$ if and only if $1/p'>\frac{d+1}{2d}$. Once again in view of Proposition \ref{prop-mod-was}, we conclude that $V \in W^{p,1}(\rd)$ if and only if $1/p' > \frac{d+1}{2d}$, hence the previous result holds provided that 
        \begin{equation}
            \frac{d+1}{2d} < \min \Big\{ \frac{1}{q}, \frac{m}{2d} \Big\}
        \end{equation}
  (which implies $m > 2$)  --- indeed, in that case there exists $1 \le p \le \infty$ such that $1/p' > \frac{d+1}{2d}$, $m >2d/p'$ and $q < p'$. 
		
		The same conclusions can be inferred for surface measures of more general smooth compact hypersurfaces with non-zero Gaussian curvature at every point; indeed, if $V$ is such a measure we have 
  \begin{equation}\label{eq 19lu}
  |\widehat{V}(\xi)|\lesssim (1+|\xi|)^{-(d-1)/2},\qquad \xi\in\bR^d;
  \end{equation}
  see, e.g., \cite{stein}. Moreover, up to suitable adjustments, we can consider smooth low-dimensional manifolds of finite type or fractal sets in $\rd$ --- see \cite{wolff}.
		\item The case where $p=\infty$, $q=1$ in Theorem \ref{thm-pde-lowreg}, corresponds to well-behaved potentials and rough initial data --- indeed, the Feichtinger space $W^{1,1}(\rd) = M^1(\rd)$ consists of functions with local $\cF L^1$ regularity and $L^1$ decay at infinity, while $\cF L^\infty(\rd)$ is known as the space of pseudomeasures, and of course contains the space of finite complex measures. The solution $u \in C^0([0,+\infty);W^{\infty,\infty}(\rd))$ is therefore, for every $t\geq 0$,  locally a pseudomeasure. 
  \item Let $V$ be a (distributional) derivative of order $\leq (d-1)/2$ of the surface measure of a smooth compact hypersurface of $\bR^d$, with non-zero Gaussian curvature at every point. Then, in light of the discussion above, $\widehat{V}$ is a compactly supported distribution in $\cF L^\infty(\bR^d)$, and therefore belongs to $W^{\infty,1}(\bR^d)$ by Proposition \ref{prop-mod-was}. As a consequence, Theorem \ref{thm-pde} applies.  
	\end{itemize}
\end{example}

\section*{Acknowledgements}
The authors are members of Gruppo Nazionale per l’Analisi Matematica, la Probabilità e le loro Applicazioni (GNAMPA) --- Istituto Nazionale di Alta Matematica (INdAM). F.\ N.\ is a Fellow of the Accademia delle Scienze di Torino.  

The present research has been developed as part of the activities of the GNAMPA-INdAM  projects ``Analisi armonica e stocastica in problemi di quantizzazione e integrazione funzionale'', award number (CUP): E55F22000270001, and ``Tecniche analitiche e probabilistiche in informazione quantistica'', award number (CUP): E53C23001670001. We gratefully acknowledge financial support from GNAMPA-INdAM.

Part of this research has been carried out at CIRM-FBK (Centro Internazionale per la Ricerca Matematica --- Fondazione Bruno Kessler, Trento), in the context of the ``Research in Pairs 2023'' program. S.\ M.\ and S.\ I.\ T.\ are grateful to CIRM-FBK for the financial support and the excellent facilities. 

The authors report that there are no competing interests to declare. No dataset has been analysed or generated in connection with this note.

\end{document}